\def\R{\mathbb R}
\def\T{\mathbb T}
\def\E{\mathcal E}
\def\L{\mathcal L}
\DeclareMathOperator{\hdim}{dim_{\mathrm H}}
\DeclareMathOperator{\tr}{tr}
\newtheorem{lem}{Lemma}[section]
\newtheorem{nt}[lem]{Notation}
\newtheorem{theorem}[lem]{Theorem}
\newtheorem{lemma}[lem]{Lemma}
\theoremstyle{remark}
\newtheorem{remark}[lem]{\bf Remark}
\numberwithin{equation}{section}
\title{Hausdorff dimension of recurrence sets}
\subjclass[2010]{37C45, 37D20, 28A80}
\author{Zhangnan Hu*}\thanks{* Corresponding author} 
\address{Z.-N.~Hu, 
             School of Mathematics, South China University of Technology, Guangzhou, 510641, P. R. China}
\email{hnlgdxhzn@163.com}
\author{Tomas Persson}
\address{T.~Persson, Centre for Mathematical Sciences, Lund
  University, Box~118, 221~00~Lund, Sweden}
\email{tomasp@maths.lth.se}
\date{\today}
\begin{document}

\begin{abstract}
  We consider linear mappings on the $d$-dimensional torus,
  defined by $T(x) = Ax \pmod 1$, where $A$ is an invertible
  $d \times d$ integer matrix, with no eigenvalues on the unit
  circle. In the case $d = 2$ and $\det A = \pm 1$, we give a
  formula for the Hausdorff dimension of the set
  \[
    \{ \, x \in \mathbb{T}^d : d (T^n (x), x) < e^{- \alpha n}
    \text{ for infinitely many } n \, \}.
  \]
\end{abstract}

\maketitle


\section{Introduction}

Let $(X, \mathscr{B},T, \mu,d)$ be a metric measure preserving
system (m.m.p.s.).
If $(X,d)$ is a separable metric space, then the well-known
Poincar\'e recurrence theorem shows that $\mu$-a.e.\ $x\in X$ is
recurrent, that is
\[
  \liminf_{n\to\infty}d(T^nx,x)=0.
\]
It tells us that for $\mu$-almost every $x\in X$, the orbit
returns to a sequence of shrinking targets of the initial point
infinitely many times. However, it shows nothing about the speed
at which the orbit can return to the initial point or the
shrinking targets of the the initial point. Boshernitzan
\cite{Bo} investigated the rate of recurrence for general
systems.

\begin{theorem}[\cite{Bo}]
  Let $(X, \mathscr{B},T, \mu,d)$ be a m.m.p.s. Assume that for
  some $\tau>0$, the $\tau$-dimensional Hausdorff measure
  $\mathcal{H}^{\tau}$ of $X$ is $\sigma$-finite. Then for
  $\mu$-a.e.\ $x\in X$,
  \[
    \liminf_{n\to\infty}n^{\frac{1}{\tau}}d(T^nx,x)< \infty.
  \]
  Futhermore, if $\mathcal{H}^{\tau} (X)=0$, then for $\mu$-almost
  every $x\in X$,
  \[
    \liminf_{n\to\infty}n^{\frac{1}{\tau}}d(T^nx,x)=0.
  \] 
\end{theorem}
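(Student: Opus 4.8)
The plan is to deduce both statements from a single quantitative bound for the measure of the ``slowly recurring'' sets
\[
  Z_N(\varepsilon)=\{\,x\in X:d(T^nx,x)\ge\varepsilon n^{-1/\tau}\text{ for all }n\ge N\,\},\qquad N\ge1,\ \varepsilon>0,
\]
namely that $\mu\bigl(Z_N(\varepsilon)\bigr)\le c_\tau\,N\,\mathcal H^{\tau}(X)\,\varepsilon^{-\tau}$, where $c_\tau$ depends only on $\tau$. This is exactly what is needed, since $\{\,x:\liminf_n n^{1/\tau}d(T^nx,x)>\varepsilon\,\}=\bigcup_{N\ge1}Z_N(\varepsilon)$. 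The bound will be proved by feeding the Hausdorff--content hypothesis into Poincar\'e recurrence and the Birkhoff ergodic theorem; everything else is reduction and bookkeeping.

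To prove the bound, fix $N,\varepsilon$, put $Z=Z_N(\varepsilon)$, and by the ergodic decomposition assume $\mu$ is ergodic. The heart is a Frostman-type estimate for $\nu:=\mu|_{Z}$. Fix $y\in X$ and a small radius $\rho$, and set $V=B(y,\rho)\cap Z$. If $z\in V$ and $T^nz\in V$ for some $n\ge N$, then $z\in Z$ gives $\varepsilon n^{-1/\tau}\le d(T^nz,z)\le 2\rho$, so $n\ge(\varepsilon/2\rho)^{\tau}$; hence the forward orbit of \emph{every} point of $V$ misses $V$ throughout the window $[\,N,(\varepsilon/2\rho)^{\tau})$. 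For $\rho$ small this window is much longer than $N$, which forces the return times of any one orbit to $V$ to lie in clusters of time-length $<N$ that are pairwise separated by at least $(\varepsilon/2\rho)^{\tau}$; so in time $[0,M]$ there are at most $NM(2\rho/\varepsilon)^{\tau}+N$ visits to $V$. On the other hand, the ergodic theorem makes this count at least $\tfrac12\mu(V)M$ for $M$ large, for $\mu$-a.e.\ $z\in V$. Comparing and letting $M\to\infty$ gives $\nu(B(y,\rho))=\mu(V)\le c_\tau N\varepsilon^{-\tau}\rho^{\tau}$ for all small $\rho$ and all $y$, whence the mass distribution principle yields $\mathcal H^{\tau}(Z)\ge\nu(Z)\,\varepsilon^{\tau}/(c_\tau N)=\mu(Z)\,\varepsilon^{\tau}/(c_\tau N)$, i.e.\ the claimed bound.

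The second assertion now follows at once: if $\mathcal H^{\tau}(X)=0$ then $\mu(Z_N(\varepsilon))=0$ for every $N$ and $\varepsilon$, and taking unions over $N$ and over $\varepsilon\in\{1/m:m\in\N\}$ gives $\liminf_n n^{1/\tau}d(T^nx,x)=0$ for $\mu$-a.e.\ $x$. For the first assertion I would first replace ``$\sigma$-finite'' by ``finite'': writing $X=\bigcup_k X_k$ with $X_k\uparrow X$ and $\mathcal H^{\tau}(X_k)<\infty$, some $X_k$ has $\mu(X_k)>0$, and passing to the first-return transformation on $X_k$ and invoking Kac's lemma (the $n$-th return time grows linearly in $n$) reduces everything to the case $\mathcal H^{\tau}(X)<\infty$. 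In that case, if $\hdim X<\tau$, pick $\tau''\in(\hdim X,\tau)$: then $\mathcal H^{\tau''}(X)=0$, so the second assertion with exponent $\tau''$ gives $\liminf_n n^{1/\tau''}d(T^nx,x)=0$, and since $1/\tau<1/\tau''$ this forces $\liminf_n n^{1/\tau}d(T^nx,x)=0$.

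What remains --- the first assertion when $0<\mathcal H^{\tau}(X)<\infty$, so $\hdim X=\tau$ --- is the genuine obstacle. Here $\{\,x:\liminf_n n^{1/\tau}d(T^nx,x)=\infty\,\}=\bigcap_{\varepsilon}\bigcup_N Z_N(\varepsilon)$ is an \emph{increasing} union in $N$, so the factor $N$ in the core bound cannot simply be dropped, and the bound by itself is insufficient. The difficulty is that the first time $N(x)$ after which $d(T^nx,x)\ge\varepsilon n^{-1/\tau}$ can fail is finite for each $x$ but not uniformly bounded on the bad set. I expect the decisive step to be removing this non-uniformity --- for instance by exhausting the bad set by the sets $\{\,x:N(x)\le N_0\,\}$ and re-running the clustering argument inside a Rokhlin tower tall enough that these sets become essentially $T$-invariant, so that the dependence on $N_0$ is absorbed into the geometric (Frostman) part of the estimate.
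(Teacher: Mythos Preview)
The paper does not prove this theorem; it is quoted from \cite{Bo} as background without argument, so there is no ``paper's own proof'' to compare against. On its own merits, your Frostman-type estimate $\mu(Z_N(\varepsilon))\le c_\tau N\,\varepsilon^{-\tau}\mathcal H^\tau(X)$ is correct: any two visit times of an orbit to $V=B(y,\rho)\cap Z_N(\varepsilon)$ differ by either $<N$ or $\ge(\varepsilon/2\rho)^\tau$, so the visits fall into blocks of time-span $<N$ separated by long gaps, and comparison with the ergodic theorem together with the mass distribution principle gives the bound. This cleanly proves the second assertion (the case $\mathcal H^\tau(X)=0$), and is essentially the mechanism Boshernitzan uses for that half.

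The first assertion, however, is not proved, and the gap you yourself flag is genuine. The factor $N$ renders the bound useless for $\bigcup_N Z_N(\varepsilon)$, and the Rokhlin-tower suggestion is a hope rather than an argument: the set $\{\liminf=\infty\}$ has no reason to be close to $T$-invariant, and making the tower tall does nothing to make the threshold $N(x)$ uniform. The usual proof of the first half does not go through your Frostman bound at all. One takes a disjoint cover $\{B_i\}$ with $\mathrm{diam}\,B_i<\delta$ and $\sum_i(\mathrm{diam}\,B_i)^\tau\le\mathcal H^\tau(X)+1$; since $\int(\mathrm{diam}\,B_{i(x)})^\tau/\mu(B_{i(x)})\,d\mu(x)=\sum_i(\mathrm{diam}\,B_i)^\tau$, Markov's inequality combined with Kac's formula produces, off a set of measure $2\eta$, a return time $n\le C_\eta/\mu(B_{i(x)})$ with $d(T^nx,x)\le\mathrm{diam}\,B_{i(x)}\le C_\eta'\,\mu(B_{i(x)})^{1/\tau}$, hence $n^{1/\tau}d(T^nx,x)\le C''_\eta$. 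Running this along a sequence $\delta_j\to0$ forces the corresponding $n$'s to infinity for non-periodic $x$, giving $\mu(\{\liminf=\infty\})\le2\eta$ for every $\eta>0$. (Minor point: in your $\sigma$-finite reduction you must run the induced-map argument for every $X_k$, not just one, to cover all of $X$.)
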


Later, Barreira and Saussol \cite{BS} related the rate of
recurrence to the lower pointwise dimension.

\begin{theorem}[\cite{BS}]
  If $T \colon X \to X$ is a Borel measurable map on a measurable
  subset $X \subset \mathbb{R}^m$, and $\mu$ is a $T$-invariant
  Borel probability measure on $X$, then for $\mu$-almost every
  $x \in X$, we have
  \[\liminf_{n\to\infty}n^{1/\tau} d(T^nx,x)=0\]
  for any $\tau>\underline{d}_{\mu} (x)$, where 
  \[
    \underline{d}_{\mu} (x)= \liminf_{r\to0} \frac{\log
      \mu(B(x,r))}{\log r}.
  \]
\end{theorem}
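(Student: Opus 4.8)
The plan is to deduce the theorem from the vanishing of a countable family of ``bad'' sets. For rationals $0 < \tau' < \tau$, a positive rational $\epsilon$, and an integer $M \ge 0$, I would put
\[
  F(\tau',\tau,\epsilon,M) = \Bigl\{\, x \in X : \underline{d}_\mu(x) < \tau' \ \text{ and }\ d(T^n x, x) \ge \epsilon\, n^{-1/\tau}\ \text{for all } n > M \,\Bigr\},
\]
and it suffices to prove $\mu\bigl(F(\tau',\tau,\epsilon,M)\bigr) = 0$ for every such quadruple. Indeed, if $N$ is the union of all these (countably many) sets then $\mu(N)=0$, and for $x \notin N$ and any real $\sigma > \underline{d}_\mu(x)$ one argues as follows: choose rationals $\underline{d}_\mu(x) < \tau' < \tau < \sigma$; if $\liminf_n n^{1/\sigma}d(T^nx,x) > 0$ then, since $n^{1/\tau - 1/\sigma}\ge 1$ for $n\ge 1$, one gets $d(T^nx,x)\ge \epsilon\,n^{-1/\tau}$ for all large $n$, so $x$ belongs to some $F(\tau',\tau,\epsilon,M)\subset N$, a contradiction. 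Hence $\liminf_n n^{1/\sigma}d(T^nx,x)=0$ for all such $\sigma$, which is the assertion.

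To prove $\mu(F) = 0$ for a fixed quadruple (write $F$ for it), I would argue by contradiction, regarding $\mu$ as a finite --- hence Radon --- Borel measure on $\mathbb{R}^m$. If $\mu(F)>0$, the Lebesgue--Besicovitch density theorem supplies a point $x_0 \in F$ with $\mu\bigl(B(x_0,r)\cap F\bigr) > \tfrac12\mu\bigl(B(x_0,r)\bigr)$ for all small $r$; and since $\underline{d}_\mu(x_0) < \tau'$ there are arbitrarily small $r$ with $\mu\bigl(B(x_0,r)\bigr) > r^{\tau'}$. I would fix such an $r$, small enough that also $L := M + \lceil 2(M+1)\,r^{-\tau'}\rceil$ satisfies $L < (\epsilon/2r)^{\tau}$ --- possible because $\tau' < \tau$ makes $r^{-\tau'} = r^{\tau-\tau'}r^{-\tau}$ negligible against $r^{-\tau}$ --- and set $B = B(x_0,r)$. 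The point of these choices: if $y \in B \cap F$ and $T^n y \in B$ for some integer $n \in (M,L]$, then $d(T^ny,y) < 2r$ while $n^{1/\tau}\le L^{1/\tau} < \epsilon/2r$, so $n^{1/\tau}d(T^ny,y) < \epsilon$, contradicting $y \in F$. Hence $B \cap F$ is contained in the set of points of $B$ whose orbit avoids $B$ throughout the time window $\{M+1,\dots,L\}$.

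The key step, which I expect to be the main obstacle, is the Kac-type estimate
\[
  \mu\bigl(\{\, y \in B : T^n y \notin B \ \text{for all integers } n \in (M,L] \,\}\bigr) \le \frac{M+1}{L-M}.
\]
To prove it I would discard the $\mu$-null set of points of $B$ that fail to return to $B$ (Poincar\'e recurrence), introduce the first-return map $R$ of $B$ --- which preserves $\mu|_B$ --- with return time $\tau_B$ and $k$-th return time $S_k = \sum_{i=0}^{k-1}\tau_B\circ R^i$, and use $S_k(y)\ge k$. Then $k(y) := \max\{\,k\ge 0 : S_k(y)\le M\,\}$ lies in $\{0,\dots,M\}$, and if the orbit of $y$ avoids $B$ on $(M,L]$ then $\tau_B(R^{k(y)}y) = S_{k(y)+1}(y)-S_{k(y)}(y) > L-M$; so the bad set is covered by $\bigcup_{j=0}^{M}\{\,y\in B : \tau_B(R^j y) > L-M\,\}$, and $R^j$-invariance together with Markov's inequality and Kac's lemma $\int_B\tau_B\,d\mu\le 1$ bounds each of the $M+1$ pieces by $(L-M)^{-1}$. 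With the chosen $L$ the right-hand side is $\le \tfrac12 r^{\tau'}$, so combining with the previous paragraph and $\mu(B)>r^{\tau'}$ I would reach $\tfrac12 r^{\tau'} < \tfrac12\mu(B) < \mu(B\cap F) \le \tfrac12 r^{\tau'}$, the desired contradiction; hence $\mu(F)=0$.

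The genuinely nonroutine ingredient is precisely this window version of Kac's lemma: the ordinary lemma controls only first return times, and the device that rescues the argument --- noting that at most $M+1$ returns to $B$ can occur before time $M$, so the bad event splits into $M+1$ pull-backs of $\{\tau_B > L-M\}$ under iterates of the measure-preserving return map --- is the heart of the matter. The remaining bookkeeping, chiefly fitting a recurrence window of length of order $r^{-\tau'}$ inside $\bigl(M,(\epsilon/2r)^{\tau}\bigr)$, is the reason for organising the reduction around two rationals $\tau' < \tau$ rather than one, but it is straightforward once the Kac estimate is available.
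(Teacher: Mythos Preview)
The paper does not prove this statement: it is quoted in the introduction as a known result of Barreira and Saussol \cite{BS} and is used only as motivation for the problem studied later. There is therefore no proof in the paper to compare your proposal against.

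That said, your argument is sound and is essentially the standard route to this result. The reduction to the countable family $F(\tau',\tau,\epsilon,M)$ is correct, the use of the Besicovitch density theorem to find a good centre $x_0$ is legitimate for a finite Borel measure on $\mathbb{R}^m$, and the parameter choice $L - M \asymp r^{-\tau'} \ll r^{-\tau}$ is exactly what is needed. Your ``window Kac'' estimate is also correct: the first-return map $R$ preserves $\mu|_B$, so $\mu\{\tau_B\circ R^j > L-M\} = \mu\{\tau_B > L-M\}$, and the Kac inequality $\int_B \tau_B\,d\mu \le 1$ holds for any (not necessarily ergodic or invertible) measure-preserving system, which is all you use. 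The covering of the bad set by the $M+1$ pull-backs $\{\tau_B\circ R^j > L-M\}$, $0\le j\le M$, is the genuine idea here and you have identified it correctly.
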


Hence a natural question is how large is the set of recurrent
points when the rate of recurrence is replaced by a general
function. More precisely, Let $(X, \mathscr{B},T, \mu,d)$ be a
m.m.p.s.\ and $r_n(x)$ be some positive function on
$\mathbb{N} \times X$. Define the recurrence set as
\[
  E = E(r_n) = \{\, x\in X : T^nx\in B(x, r_n(x))~\text{ for
    infinitely many } n\ge1 \, \}.
\]
Tan and Wang \cite{tanwang} calculated the Hausdorff dimension of
$E(r_n)$ when $T$ is the $\beta$-trans\-formation with
$\beta>1$. Later, Seuret and Wang \cite{sw15} proved a similar
result for conformal iterated function systems. Chang, Wu and Wu
\cite{cww} considered the recurrence set on a self-similar set
with the strong separation condition. Then Baker and Farmer
\cite{bafa} generalised their results to finite conformal
iterated function systems. Hussein, Li, Simmons and Wang
\cite{hlsw} showed that the measure of $E(r_n)$ obeys a
zero--full law for some conformal and expanding systems.
Kirsebom, Kunde and Persson \cite{kkp} investigated the measure
of $E(r_n)$ for a class of mixing interval maps and some linear
maps on tori.

The recurrence set is a limsup set which often has a large
intersection property, originally introduced by Falconer
\cite{fallip}. Given $s\in(0,m]$, he defined
$\mathcal{G}^s(\mathbb{R}^m)$ to be the class of all $G_{\delta}$
sets $F$ in $\mathbb{R}^d$ such that the Hausdorff dimension any
set in $\mathcal{G}^s(\R^m)$ is at least $s$, and closed under
similarity transformations and countable intersections. To define
the corresponding class of sets on the $d$-dimensional torus
$\mathbb{T}^d$ is straightforward.

Persson and Reeve \cite{perssonreeve} used Riesz potentials to
determine if a limsup set belongs to the class
$\mathcal{G}^s(\T^m)$. The following lemma is important for the
proof of our results.
\begin{lemma}[Lemma~2.1 in \cite{per22}. See also \cite{perssonreeve}]\label{lemma:persson}
  Let $E_n$ be open sets in $\T^m$ and let $\mu_n$ be measures
  with $\mu_n( \T^m\setminus E_n) = 0$. If there is a constant
  $C$ such that
  \[
    C^{-1} \le \liminf_{n\to\infty} \mu_n(B) \le
    \limsup_{n\to\infty} \mu_n(B) \le C
  \] 
  for any ball B, and
  \[
    \iint |x-y|^{-s}d\mu_n(x)d\mu_n(y)<C
  \]
  for all $n$, then
  $\limsup_{n\to\infty}E_n \in \mathcal{G}^s(\T^m)$, and in
  particular we have
  $\hdim (\limsup_{n\to\infty}E_n ) \ge s$.
\end{lemma}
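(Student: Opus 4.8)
The plan is to use Falconer's characterisation of the large-intersection classes \cite{fallip}. Since every $E_n$ is open, each $\bigcup_{n\ge N}E_n$ is open, and hence $\limsup_n E_n=\bigcap_N\bigcup_{n\ge N}E_n$ is a $G_\delta$ set. By that characterisation, membership in $\mathcal G^s(\T^m)$ follows once I produce a constant $c>0$, depending only on $m$, $s$ and $C$, such that
\[
  \mathcal H^s_\infty\bigl(\limsup_n E_n\cap B\bigr)\ge c\,\mathcal H^s_\infty(B)
  \qquad\text{for every ball }B,
\]
where $\mathcal H^s_\infty$ denotes $s$-dimensional Hausdorff content; a uniform lower bound of this shape together with the $G_\delta$ property self-improves to the equality $\mathcal H^s_\infty(\limsup_n E_n\cap B)=\mathcal H^s_\infty(B)$ that defines the class. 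So the whole matter reduces to this content estimate.

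The engine is the energy method. Fix $n$ and set $\phi_n(x)=\int|x-y|^{-s}\,d\mu_n(y)$, so that $\int\phi_n\,d\mu_n=\iint|x-y|^{-s}\,d\mu_n\,d\mu_n<C$ by hypothesis, while $\phi_n(x)\ge r^{-s}\mu_n(B(x,r))$ for every $r>0$. Chebyshev's inequality then shows that the super-level set $\{\phi_n>M\}$ has $\mu_n$-mass at most $C/M$, while on its complement $A_n=\{\phi_n\le M\}$ one has the Frostman bound $\mu_n(B(x,r))\le M r^s$. The mass hypothesis forces $\mu_n(\T^m)$ to stay bounded above and below, so choosing $M$ a suitably large multiple of $C^2$ makes $\mu_n(\T^m\setminus A_n)$ strictly smaller than the positive number $\liminf_n\mu_n(B)$ for each fixed ball $B$. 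Consequently $\nu_n:=\mu_n|_{A_n}$ is supported, up to a $\mu_n$-null set, inside $E_n$; it obeys $\nu_n(B')\lesssim|B'|^s$ for every ball $B'$ with constant independent of $n$; and $\liminf_n\nu_n(B)>0$ for each fixed $B$. The mass distribution principle then gives $\mathcal H^s_\infty(E_n\cap B)\gtrsim\nu_n(B)\gtrsim\mathcal H^s_\infty(B)$ for all large $n$, with constants depending only on $m,s,C$; in particular the same lower bound holds for every open set $U_N:=\bigcup_{n\ge N}E_n$.

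The hard part is passing from these uniform bounds for the open sets $U_N$ to their intersection $\bigcap_N U_N=\limsup_n E_n$, since a uniform content lower bound need not survive a countable intersection. My first plan is to show that ``$G_\delta$ plus a uniform content lower bound on all balls'' already forces $U_N\in\mathcal G^s(\T^m)$ for every $N$ — a self-improvement property of the content condition — and then invoke closure of $\mathcal G^s(\T^m)$ under countable intersections to conclude $\limsup_n E_n=\bigcap_N U_N\in\mathcal G^s(\T^m)$. Should that self-improvement be unavailable in the form needed, the fallback is a direct construction: fix a ball $B$, pass to weak-$\ast$ limits of the pieces $\nu_n|_B$ along a subsequence, and build a Cantor-type limit measure by repeatedly descending into sub-balls lying in some $E_n$ with $n$ arbitrarily large — this is where openness of the $E_n$ enters — while retaining a fixed proportion of the mass and the exponent-$s$ Frostman bound at every stage; the limit measure then sits on $\limsup_n E_n\cap B$ and the mass distribution principle again yields the content bound. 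The genuinely delicate point, and the one that uses the two-sided mass control and the uniform energy bound in an essential way, is precisely this: reconciling the ``infinitely often'' nature of the limsup set with a single limiting object of finite $s$-energy.
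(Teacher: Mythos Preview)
The paper does not prove this lemma at all: it is quoted verbatim as Lemma~2.1 of \cite{per22} (with \cite{perssonreeve} as a precursor) and used as a black box throughout. There is therefore no ``paper's own proof'' to compare your attempt against.

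That said, your sketch is broadly in the spirit of how the result is actually proved in the cited references. The Persson--Reeve argument does proceed via Falconer's net-content characterisation of $\mathcal G^s$, and the passage from a uniform energy bound to a Frostman-type bound on a large subset (your Chebyshev step) is exactly the mechanism. Two points are worth flagging. First, your ``first plan'' --- show each $U_N\in\mathcal G^s$ and then intersect --- is the correct one, and it works because Falconer's criterion requires $\mathcal H^t_\infty(F\cap D)=\mathcal H^t_\infty(D)$ for all $t<s$ and all dyadic cubes $D$; for an \emph{open} set $U_N$, a uniform lower bound $\mathcal H^t_\infty(U_N\cap D)\ge c\,\mathcal H^t_\infty(D)$ does self-improve to equality via the increasing-sets lemma for net measures, so your instinct there is right and the ``fallback'' Cantor construction is unnecessary. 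Second, note that the mass hypothesis as stated in the lemma (bounds on $\mu_n(B)$, not on $\mu_n(B)/\mathcal L(B)$) is what appears in \cite{per22}; in the applications later in the paper the authors verify the normalised version, which of course implies the stated one for balls of bounded radius. Your content bound $\mathcal H^s_\infty(E_n\cap B)\gtrsim \nu_n(B)$ then needs to be compared with $|B|^s$ rather than with $\mathcal H^s_\infty(B)\asymp |B|^s$; since $\nu_n(B)\ge C^{-1}-C/M$ eventually, this gives a lower bound that is \emph{constant}, hence $\gtrsim |B|^s$ only for balls of bounded radius, which is all one needs on $\mathbb T^m$.
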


Motivated by above results, we focus on the Hausdorff dimension
of $E(r_n)$ when $T$ is a linear mapping on $\T^2$ and the rate
$r_n$ does not depend on the initial point, that is, the set
\[
  E = \{\, x\in\T^2 : T^n(x) \in B(x,r_n) \text{ for infinitely
    many } n\ge1 \, \}.
\]

\begin{theorem}
  \label{main}
  Let $A$ be a $2 \times 2$ integer matrix with $|\!\det A|=1$ and
  an eigenvalue $|\lambda|>1$. Let $T(x)=Ax \pmod 1$, and for
  $n\ge1$, $r_n=e^{- \alpha n}$, $\alpha>0$. Then
  \[
    \hdim E = s_0,
  \]
  where
  \[
    s_0= \min
    \Bigl\{\frac{2\log|\lambda|}{\alpha + \log|\lambda|},
    \frac{\log|\lambda|}{\alpha} \Bigr\}.
  \]
  Moreover, for $\alpha>0$, we have $E\in\mathcal{G}^{s_0} (\T^2).$ 
\end{theorem}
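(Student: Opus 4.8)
\medskip\noindent
The plan is to reformulate $E$ as a $\limsup$ set, obtain $\hdim E\le s_0$ by a direct covering, and obtain both $\hdim E\ge s_0$ and $E\in\mathcal G^{s_0}(\T^2)$ from Lemma~\ref{lemma:persson} applied to the normalised Lebesgue measures on the sets whose $\limsup$ is $E$; the decisive ingredient will be a uniform bound for their $s_0$-energy.

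\emph{Reformulation and upper bound.}
Since $T^n(x)=A^nx\pmod1$, one has $d(T^n(x),x)=\lVert(A^n-I)x\rVert$, the distance of $(A^n-I)x$ to $\Z^2$, whence $E=\limsup_{n\to\infty}E_n$ with the open sets $E_n=\{\,x\in\T^2:\lVert(A^n-I)x\rVert<r_n\,\}$. Let $\lambda$ and $\pm\lambda^{-1}$ be the (real) eigenvalues of $A$; the characteristic polynomial of $A$ is then irreducible over $\mathbb Q$, so $\lambda$ is a quadratic irrational and the eigendirections have quadratic-irrational, hence badly approximable, slopes. From $A^n=\lambda^nP_++(\pm\lambda^{-1})^nP_-$ (with $P_\pm$ the spectral projections) one gets $\lVert A^n-I\rVert\asymp|\lambda|^n$ and $|\det A^n-\tr A^n+1|\asymp|\lambda|^n$, so the singular values of $A^n-I$ are $\asymp|\lambda|^n$ and $\asymp1$. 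Hence $E_n=(A^n-I)^{-1}B(0,r_n)$ is a union of $\asymp|\lambda|^n$ pairwise disjoint ellipses with semi-axes $a_n\asymp r_n|\lambda|^{-n}$ and $b_n\asymp r_n$. Covering each ellipse by $\asymp b_n/a_n$ balls of radius $a_n$, or by $\asymp1$ balls of radius $b_n$, the $s$-dimensional cost of covering $\bigcup_{n\ge N}E_n$ is $\lesssim\sum_{n\ge N}|\lambda|^n\min\{(b_n/a_n)a_n^{\,s},b_n^{\,s}\}$, which for $s>s_0$ is a geometric series tending to $0$ as $N\to\infty$; thus $\hdim E\le s_0$ (the two terms in $s_0$ being the costs of the two choices of radius).

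\emph{The measures and equidistribution.}
Let $\mu_n$ be the normalised Lebesgue measure on $E_n$. Since $x\mapsto(A^n-I)x$ preserves Lebesgue measure and is $|\det(A^n-I)|$-to-one, $\mathrm{Leb}(E_n)=\mathrm{Leb}(B(0,r_n))=\pi r_n^2$, so $\mu_n$ is a probability measure with $\mu_n(\T^2\setminus E_n)=0$. Expanding $\mathbbm1_{B(0,r_n)}$ in a Fourier series and substituting $\ell\mapsto(A^n-I)^T\ell$ gives $\widehat{\mu_n}(k)=\widehat{\mathbbm1_{B(0,r_n)}}(\ell)/(\pi r_n^2)$ when $k=(A^n-I)^T\ell$ for some (necessarily unique) $\ell\in\Z^2$, and $\widehat{\mu_n}(k)=0$ otherwise. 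For fixed $k\ne0$, whenever $\ell_n:=((A^n-I)^T)^{-1}k$ lies in $\Z^2$ these vectors are pairwise distinct (as $A^T$ has no eigenvalue $1$) and converge to the projection of $-k$ onto the eigenline of $A^T$ of modulus $<1$, a non-integer vector; so only finitely many $n$ contribute, $\widehat{\mu_n}(k)\to0$, and $\mu_n\rightharpoonup\mathrm{Leb}$. This supplies the equidistribution hypothesis of Lemma~\ref{lemma:persson}.

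\emph{The energy bound — the crux.}
It remains to prove $\iint_{\T^2}|x-y|^{-s_0}\,d\mu_n\,d\mu_n\le C$ uniformly in $n$, and this is the heart of the matter. The centres of the ellipses forming $E_n$ are the mod-$\Z^2$ reduction of a lattice of covolume $\asymp|\lambda|^{-n}$ whose two successive minima are both $\asymp|\lambda|^{-n/2}$ — here the badly approximable slopes enter — so $\mu_n(B(x,\rho))\asymp\rho^2$ for $\rho\gtrsim|\lambda|^{-n/2}$, while at smaller scales the shape of the ellipses gives elementary upper bounds for $\mu_n(B(x,\rho))$; inserting these into $\int|x-y|^{-s}\,d\mu_n(y)=\int_0^\infty s\rho^{-s-1}\mu_n(B(x,\rho))\,d\rho$ and balancing exponents shows the integral is bounded, uniformly in $n$ and $x$, exactly when $s\le s_0$. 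Equivalently, via Parseval and the decay $|\widehat{\mathbbm1_{B(0,r)}}(\ell)|\lesssim r^2\min\{1,(r|\ell|)^{-3/2}\}$,
\[
  \iint_{\T^2}|x-y|^{-s}\,d\mu_n\,d\mu_n\ \lesssim\ 1+\sum_{\ell\in\Z^2\setminus\{0\}}\bigl|(A^n-I)^T\ell\bigr|^{\,s-2}\min\bigl\{1,(r_n|\ell|)^{-3}\bigr\},
\]
and one decomposes this sum dyadically both in $|\ell|$ and in the angle $\theta$ between $\ell$ and the eigenline of $A^T$ of modulus $<1$, using $\bigl|(A^n-I)^T\ell\bigr|\asymp|\ell|\max\{1,|\lambda|^n\theta\}$ and the estimate $\#\{\ell:|\ell|\le R,\ \text{angle}\le\theta\}\lesssim1+\theta R^2$ from bad approximability; the resulting sum is again uniformly bounded precisely at $s=s_0$. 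Lemma~\ref{lemma:persson} then gives $E=\limsup_nE_n\in\mathcal G^{s_0}(\T^2)$ and $\hdim E\ge s_0$, and together with the covering bound $\hdim E=s_0$. I expect the principal obstacle to be exactly this energy estimate — controlling $\mu_n$ at the scales between $r_n$ and $|\lambda|^{-n/2}$ (equivalently, the two-variable dyadic summation and the lattice-point count near the contracting eigenline) — and it is here that the hypotheses $d=2$ and $|\det A|=1$ are essential, through the quadratic irrationality of the eigendirections and the fact that $A^n-I$ has one singular value $\asymp1$ and one $\asymp|\lambda|^n$.
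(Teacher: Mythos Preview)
Your outline is sound and your identification of the energy bound as the crux is correct, but your route diverges substantially from the paper's. The paper does not use Fourier analysis or the bad-approximability/successive-minima viewpoint at all. Instead it establishes, by explicit matrix computation, that for odd $n=2k+1$ the periodic points contain the square grid $\frac{1}{S_k}\Z^2$ with $S_k\asymp\lambda^{n/2}$ (Lemmas~\ref{per:contain} and~\ref{per:contained}); this arithmetic fact is the paper's substitute for your lattice-minima claim. The paper then splits the lower bound into two regimes. For $\alpha\ge\tfrac12\log\lambda$ the ellipses fit inside the grid cells and a direct energy calculation on $\mu_n=\mathcal L|_{E_n}/\mathcal L(E_n)$ succeeds, essentially as you describe. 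For $\alpha<\tfrac12\log\lambda$ the ellipses overrun the grid, and rather than push the two-dimensional energy estimate through, the paper slices by vertical lines $L_{x_0}$, proves a one-dimensional large-intersection statement $\limsup_nF_n(x_0)\in\mathcal G^{s_0-1}(L_{x_0})$ for each slice, and lifts this to $\T^2$ via a disintegration lemma (Lemma~\ref{lemma:main}) bounding $I_{s+t}(\tilde\mu)$ by $\sup_x\lVert R_s\mu_x\rVert_\infty\lVert R_t\nu\rVert_\infty$, applied with $\nu=\mathcal L^1$ and $t=1-\varepsilon$. Your unified Fourier/lattice-geometry approach would avoid this case split and the slicing machinery, at the cost of the two-parameter dyadic sum you flag as the obstacle; the paper's approach trades that sum for a lengthy verification that $(\tr A-2)S_k(A^{2k+1}-I)^{-1}$ and $\frac{1}{S_k}(A^{2k+1}-I)$ are integer matrices. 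Both approaches ultimately rest on the same quadratic-irrational Diophantine input --- the paper invokes Liouville's inequality explicitly (see~\eqref{eq:liouville}) to separate ellipses, while you encode it as bad approximability of the eigendirections --- and both need $d=2$ and $|\det A|=1$ for exactly this reason.
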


\begin{figure}
  \includegraphics[scale=1]{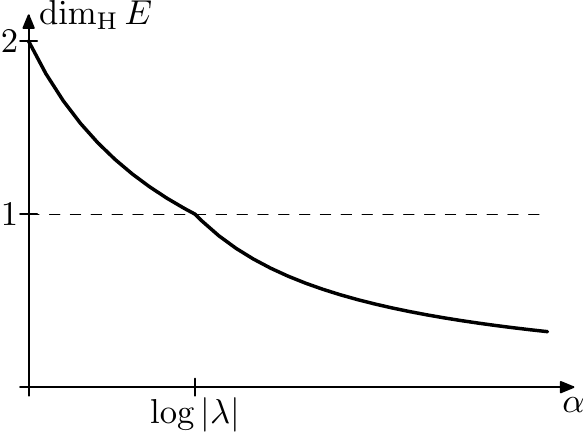}
  \caption{The graph of $\hdim E$ as a function of $\alpha$, with $A =\bigl[\protect \begin{smallmatrix} 2 & 1 \protect\\ 1 &1 \protect\end{smallmatrix}   \bigr].$}
\end{figure}

He and Liao \cite[Theorem~1.7]{HeLiao} gave a formula for the
dimension of $E$ when $A$ is a diagonal matrix, not necessarily
integer, and with all diagonal elements of modulus larger than
1. Our result seems to be the first result of this type when
there is both contraction and expansion present.

\begin{remark}
  For a general non-increasing sequence of positive real numbers
  $\{r_n\}_{n\ge1}$, let
  \[
    \alpha= \liminf_{n\to\infty} \frac{- \log r_n}{n}.
  \]
  Then Theorem~\ref{main} also holds.
\end{remark}


Let $A$ and $\{r_n\}_{n\ge1}$ be as in
Theorem~\ref{main}. 
Rewrite the recurrence set as
\[
  E = \{\, x\in\T^2 : A^nx \pmod 1 \in B(x,r_n) \text{ i.o.} \,
  \},
\]
and let
\begin{align*}
  \E_n &:= \{\,x\in\T^2 : A^nx \pmod 1 \in B(x,r_n) \, \} \\
       &\phantom{:}= \{x\in\T^2 : (A^n-I)x \pmod 1 \in B(0,r_n) \, \},
\end{align*}
where
$I$ is the identity matrix. Then $E= \limsup_{n\to\infty} \E_n$.

The paper is organised as follows.  In next section, we will
discuss the periodic points
\[
  \{\, x \in\T^2 : (A^n-I)x \pmod 1 =0 \, \},
\]
which is crucial to our proof since 
to understand
$\E_n$, we need to understand the distribution of these periodic
points. The proof of our main result is divided into two
parts. In Section 3, we give the upper bound on the Hausdorff
dimension of $E$. The last section is dedicated to prove that
$E$ has a large intersection property, which gives the lower
bound on the Hausdorff dimension of $E$.

Without loss of generality, we only prove Theorem~\ref{main} for
$\det A=1$ and $\lambda>1$, since for $\lambda<-1$ or
$\det A=-1$, we can consider $A^2$ instead of $A$, whose
eigenvalues are $\lambda^2>1$ and $\lambda^{-2}$, and
$\det A^2=1$. Hence we omit the proof for other cases.

\begin{nt}
  Write $f_n\lesssim g_n$, $n\in\mathbb{N}$, if there is an
  absolute constant $0<c< \infty$ such that for all
  $n\in\mathbb{N}$, $f_n\le cg_n$.  If $f_n\lesssim g_n$ and
  $g_n\lesssim f_n$ for $n\in\mathbb{N}$, then we denote
  $f_n\asymp g_n$.
\end{nt}

\section{Periodic points} 

Let
$A= \bigl[ \begin{smallmatrix} a & b \\ c & d \end{smallmatrix}
\bigr]$ be an integer matrix with $\det A=1$ and eigenvalue $\lambda>1$.
Then
\[
  \begin{bmatrix} 1\\ \frac{\lambda-a}{b} \end{bmatrix} \quad{\rm
    and} \quad  \begin{bmatrix} 1\\
    \frac{\lambda^{-1}-a}{b} \end{bmatrix}
\]
are eigenvectors with eigenvalues $\lambda$ and
$\lambda^{-1}$. We can diagonalise $A$ as $A=TDT^{-1}$, where
\[
  T= \begin{bmatrix} 1 & 1 \\
    \frac{\lambda-a}{b} & \frac{\lambda^{-1}-a}{b}
  \end{bmatrix}, \ T^{-1} =
  \frac{b\lambda}{1- \lambda^2}
  \begin{bmatrix}
    \frac{\lambda^{-1}-a}{b} & -1 \\
    - \frac{\lambda-a}{b} & 1 \rule{0pt}{13pt}
  \end{bmatrix} \text{ and } D= \begin{bmatrix}
    \lambda&0\\0&\lambda^{-1} \end{bmatrix}.
\]
Therefore, for $n\ge1$,
\begin{align*}
  A^n &= TD^nT^{-1} \\
    &=
    \begin{bmatrix}
      \frac{(\lambda^{-1}-a) \lambda^{n+1} - (\lambda-a)
        \lambda^{-n+1}}{1 - \lambda^2} &
      \frac{b( - \lambda^{n+1}+ \lambda^{-n+1})}{1- \lambda^2} \\
      \frac{(\lambda^{-1}-a) (\lambda-a) \lambda^{n+1} -
        (\lambda-a) (\lambda^{-1}-a) \lambda^{-n+1}}{b(1 -
        \lambda^2)} & \frac{(\lambda^{-1}-a) \lambda^{-n+1} -
        (\lambda-a) \lambda^{n+1}}{1- \lambda^2} \rule{0pt}{14pt}
    \end{bmatrix}.
\end{align*}
It follows that
\begin{multline*}
  A^n-I \\
  = \begin{bmatrix} \frac{(\lambda^{-1}-a) \lambda^{n+1} -
      (\lambda-a) \lambda^{-n+1}}{1- \lambda^2} -1
    &\frac{b(- \lambda^{n+1} + \lambda^{-n+1})}{1 - \lambda^2} \\
    \frac{(\lambda^{-1}-a) (\lambda-a) \lambda^{n+1} -
      (\lambda-a) (\lambda^{-1}-a) \lambda^{-n+1}}{b(1 -
      \lambda^2)} &\frac{(\lambda^{-1}-a) \lambda^{-n+1} -
      (\lambda-a) \lambda^{n+1}}{1- \lambda^2}-1 \rule{0pt}{14pt}
  \end{bmatrix}.
\end{multline*}
Put
\[
  H_n := \det(A^n-I). 
\]
 
From the equations above, we have
\begin{multline*}
  (A^n-I)^{-1} \\ \!\! = {\textstyle \frac{1}{H_n}} \!
  \begin{bmatrix}
    \frac{(\lambda^{-1} - a) \lambda^{-n+1} - (\lambda-a)
      \lambda^{n+1}}{1 - \lambda^2} - 1&\frac{b(\lambda^{n+1} -
      \lambda^{-n+1})}{1 - \lambda^2} \\
    \frac{(a - \lambda^{-1}) (\lambda-a) \lambda^{n+1} +
      (\lambda-a) (\lambda^{-1}-a) \lambda^{-n+1}}{b(1-
      \lambda^2)} & \frac{(\lambda^{-1}-a) \lambda^{n+1} -
      (\lambda-a) \lambda^{-n+1}}{1 - \lambda^2} - 1
  \end{bmatrix}.
\end{multline*}
We observe that $\E_n$ consists of elliptical discs
$\{\E_n^i\}_i$ with semi-axes comparable to
$r_n\frac{\lambda^n-1}{|H_n|}$ and $r_n\frac{1-
  \lambda^{-n}}{|H_n|}$, whose centres are periodic points with
period $n$. If $A$ is symmetric, then the semi-axes of $\{\E_n^i\}_i$ equal to $r_n\frac{\lambda^n-1}{|H_n|}$ and $r_n\frac{1-
  \lambda^{-n}}{|H_n|}$, since the eigenvectors of $A$ are orthogonal. 
  
Recall that a point $x\in\T^2$ is called a \emph{periodic point}
with period $n$ if
\begin{equation} \label{equation:periodic}
  (A^n-I)x~(\rm mod~1)=0.
\end{equation}
In this section, we investigate the periodic points in order to
understand the distribution of the elliptical discs
$\{\E_n^i\}_i$. 

\subsection{The number of periodic points}

\begin{lemma}
  For every $n\ge1$, $H_n=2- \lambda^n- \lambda^{-n}$, and $H_n$
  is an integer.
\end{lemma}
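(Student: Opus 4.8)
The plan is to read off $H_n$ from the eigenvalues of $A$ rather than from the explicit entries of $A^n - I$. Since $A$ is a $2 \times 2$ matrix with $\det A = 1$, the product of its two eigenvalues is $1$; as $\lambda > 1$ is one of them, the other must be $\lambda^{-1}$. Thus $A$ is diagonalisable — indeed the excerpt already records $A = TDT^{-1}$ with $D = \bigl[\begin{smallmatrix}\lambda & 0\\ 0 & \lambda^{-1}\end{smallmatrix}\bigr]$ — so $A^n = TD^nT^{-1}$ has eigenvalues $\lambda^n$ and $\lambda^{-n}$, and hence $A^n - I$ has eigenvalues $\lambda^n - 1$ and $\lambda^{-n} - 1$.

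The determinant of a matrix is the product of its eigenvalues, so
\[
  H_n = \det(A^n - I) = (\lambda^n - 1)(\lambda^{-n} - 1) = 2 - \lambda^n - \lambda^{-n}.
\]
Equivalently, $H_n = \det(I - A^n) = \chi_{A^n}(1)$, where $\chi_{A^n}(t) = (t - \lambda^n)(t - \lambda^{-n})$ is the characteristic polynomial of $A^n$; or one can simply expand the displayed matrix $A^n - I$ and simplify, but going through the eigenvalues avoids that calculation.

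For the integrality claim, note that $A$ has integer entries, so $A^n$ has integer entries for every $n \ge 1$, and therefore so does $A^n - I$. The determinant of an integer matrix is an integer, whence $H_n \in \mathbb{Z}$.

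I do not expect any real obstacle here: the only points requiring a word of care are that the hypothesis $\det A = 1$ is precisely what forces the second eigenvalue to be $\lambda^{-1}$ (so that the formula does not involve a genuinely unknown second eigenvalue), and that although the expression $2 - \lambda^n - \lambda^{-n}$ is written in terms of the generally irrational number $\lambda$, the quantity it computes is an integer because it is the determinant of the integer matrix $A^n - I$; one can also see this directly from $\lambda^n + \lambda^{-n} = \tr(A^n) \in \mathbb{Z}$.
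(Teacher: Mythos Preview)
Your proof is correct and is essentially the same as the paper's: the paper also computes $H_n = \det(A^n - I) = \det(D^n - I) = (\lambda^n - 1)(\lambda^{-n} - 1) = 2 - \lambda^n - \lambda^{-n}$ via the diagonalisation, and concludes integrality from the fact that $A^n - I$ is an integer matrix. Your additional remarks (characteristic polynomial, $\lambda^n + \lambda^{-n} = \tr A^n$) are nice but not needed.
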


\begin{proof}
  For $n\ge1$, we have
  \begin{equation*}
    \begin{split}
      H_n& = \det (A^n-I) =
      \det(D^n-I) \\
      &= (\lambda^n-1) (\lambda^{-n}-1)
      =2- \lambda^n- \lambda^{-n},
    \end{split}
  \end{equation*}
  Since $A^n-I$ is an integer matrix, $H_n$ is an integer.
\end{proof}

\begin{lemma}
  \label{number:periodic}
  The number of periodic points with period $n$ is $|H_n|$, that
  is
  \[
    \# \{ \, x \in \T^2 : (A^n-I)x \pmod 1 = 0
    \, \} = |H_n|.
  \]
\end{lemma}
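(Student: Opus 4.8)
The plan is to count solutions $x \in \T^2$ to $(A^n - I)x \equiv 0 \pmod 1$ by viewing $A^n - I$ as a linear endomorphism of the torus. Writing $M = A^n - I$, which is an integer matrix with $\det M = H_n \neq 0$ (this is nonzero since $|\lambda^n| > 1$ implies $\lambda^n + \lambda^{-n} \neq 2$), the set in question is exactly the kernel of the induced group homomorphism $\overline{M} \colon \T^2 \to \T^2$, $x \mapsto Mx \pmod 1$. The standard fact I would invoke is that for an integer matrix $M$ with nonzero determinant, the map $\overline{M}$ is a surjective group endomorphism of $\T^2$ whose kernel has cardinality exactly $|\det M|$; equivalently, $\#\ker \overline{M} = [\,\Z^2 : M\Z^2\,] = |\det M|$.

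The key steps, in order, are as follows. First, observe that $x \in \T^2$ satisfies $Mx \equiv 0 \pmod 1$ if and only if $Mx \in \Z^2$, i.e.\ $x \in M^{-1}\Z^2 / \Z^2$ (using that $M$ is invertible over $\Q$). So the solution set is the finite abelian group $(M^{-1}\Z^2)/\Z^2$. Second, identify its order: since $M \Z^2 \subseteq \Z^2$, applying the isomorphism $M^{-1}$ gives $\Z^2 \subseteq M^{-1}\Z^2$, and multiplication by $M$ gives a group isomorphism $(M^{-1}\Z^2)/\Z^2 \xrightarrow{\ \sim\ } \Z^2 / M\Z^2$. Third, compute $\#(\Z^2/M\Z^2) = |\det M|$; this is the classical index formula for a full-rank sublattice, which one gets from the Smith normal form $M = U \, \mathrm{diag}(d_1,d_2)\, V$ with $U, V \in \mathrm{GL}_2(\Z)$, whence $\Z^2/M\Z^2 \cong \Z/d_1 \oplus \Z/d_2$ has order $|d_1 d_2| = |\det M|$. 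Finally, substitute $\det M = \det(A^n - I) = H_n$ from the previous lemma to conclude the count is $|H_n|$.

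There is essentially no serious obstacle here; the only point requiring a moment's care is confirming $H_n \neq 0$ so that $M$ is genuinely invertible over $\Q$ and the argument goes through — this is immediate from $H_n = 2 - \lambda^n - \lambda^{-n}$ together with $\lambda > 1$, since the function $t \mapsto t + t^{-1}$ equals $2$ only at $t = 1$. I would also remark that the same reasoning shows the set of period-$n$ points is a subgroup of $\T^2$ isomorphic to $\Z^2/(A^n-I)\Z^2$, a fact that will be convenient in the next subsection when analysing how these points are distributed (their structure as a subgroup, and the shape of the lattice $(A^n-I)^{-1}\Z^2$, controls the spacing). If one prefers to avoid citing Smith normal form, an equally short route is to note that the number of lattice points of $M\Z^2$ in a large box $[0,N)^2$ is asymptotically $N^2/|\det M|$ while the number of points of $\Z^2$ there is $\sim N^2$, and the index is the ratio of these densities; but the Smith normal form argument is cleaner and also yields the group structure.
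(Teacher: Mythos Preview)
Your argument is correct, but the paper takes a different route. Rather than computing the lattice index $[\Z^2 : M\Z^2]$ via Smith normal form, the paper observes that the solutions in $[0,1)^2$ correspond bijectively to the integer lattice points inside the parallelogram $(A^n-I)[0,1)^2$, and then invokes Pick's theorem to show that the number of such lattice points (with the appropriate half-open boundary convention) equals the area, namely $|\det(A^n-I)| = |H_n|$. So the paper's proof is geometric/combinatorial, while yours is algebraic.

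Each approach has its merits. Pick's theorem is elementary and self-contained, but is specific to the plane and does not directly yield the group structure of the set of periodic points. Your Smith normal form argument works verbatim in any dimension $d$ (giving $\#\ker \overline{M} = |\det M|$ for any $M \in \mathrm{GL}_d(\Q) \cap M_d(\Z)$) and, as you note, exhibits the periodic points as a group isomorphic to $\Z^2/(A^n-I)\Z^2$; this extra structural information is indeed relevant to the distribution questions taken up later in Section~2.2, though the paper handles those by direct computation rather than by appealing to the quotient structure.
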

We will use the following theorem to prove Lemma~\ref{number:periodic}.

\begin{theorem}[Pick's theorem \cite{pick}]
  Suppose that a polygon has integer coordinates for all of its
  vertices. Let $i$ be the number of integer points interior to
  the polygon, and let $b$ be the number of integer points on its
  boundary (including both vertices and points along the
  sides). Then the area (square units) of the polygon is
  \[
    i+{\frac{b}{2}}-1.
  \]
\end{theorem}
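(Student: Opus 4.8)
The plan is to prove Pick's theorem by the classical two-step reduction. Throughout I assume, as the statement implicitly does, that the polygon is simple (non-self-intersecting boundary). The first step is to show that the formula is \emph{additive} under subdivision, so that it suffices to verify it for triangles; the second is to reduce further to triangles whose only lattice points are their three vertices---call these \emph{primitive}---and to prove that every primitive triangle has area exactly $\tfrac12$. Note that $\tfrac12$ is precisely the value the formula predicts for a primitive triangle, since then $i=0$ and $b=3$ give $i+\tfrac b2-1=\tfrac12$.

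For the additivity step, suppose a lattice polygon $P$ is cut by a lattice polyline with lattice endpoints into two lattice polygons $P_1,P_2$, the cut carrying $k\ge2$ lattice points in total. Writing $A,i,b$ for the area and the interior- and boundary-point counts of $P$, and $A_j,i_j,b_j$ for those of $P_j$, one checks by bookkeeping that $i=i_1+i_2+(k-2)$ and $b=b_1+b_2-2k+2$---the $k-2$ non-endpoint lattice points of the cut move from the boundaries of $P_1,P_2$ into the interior of $P$, while each endpoint is counted once in $b$ and once in each of $b_1,b_2$. Substituting gives $i+\tfrac b2-1=(i_1+\tfrac{b_1}2-1)+(i_2+\tfrac{b_2}2-1)$, so Pick's formula for $P_1$ and $P_2$ yields it for $P$. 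Invoking the classical fact that every simple polygon has a triangulation all of whose vertices are vertices of the polygon, an induction on the number of triangles then reduces the theorem to the case of a single lattice triangle.

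Next I would reduce a lattice triangle $T$ to primitive ones. Any lattice triangle has area $\tfrac12|\det(v_1-v_0,v_2-v_0)|\in\tfrac12\Z_{>0}$, hence area $\ge\tfrac12$, so any decomposition of $T$ into lattice triangles has at most $2\operatorname{area}(T)$ pieces. As long as $T$ contains a lattice point $p$ besides its vertices, split it---joining $p$ to the three vertices if $p$ is interior, or to the opposite vertex if $p$ lies on an edge---which strictly increases the number of lattice triangles; by the area bound this terminates, decomposing $T$ into primitive triangles, and by additivity it remains to handle a primitive $T$. For this, put $u=v_1-v_0$, $w=v_2-v_0$, so $2\operatorname{area}(T)=|\det(u,w)|=[\Z^2:\Lambda]$ with $\Lambda=\Z u+\Z w$. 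The half-open parallelogram $Q=\{\,v_0+su+tw:0\le s<1,\ 0\le t<1\,\}$ is a fundamental domain for $\Lambda$, hence contains exactly $[\Z^2:\Lambda]$ points of $\Z^2$, one of them $v_0$. If $[\Z^2:\Lambda]\ge2$ there is another, $p=v_0+su+tw$ with $(s,t)\ne(0,0)$. When $s+t\le1$, $p$ lies in $T$ and, since $s<1$ and $t<1$, is not a vertex, contradicting primitivity. When $s+t>1$, the point-reflection $p'=v_1+v_2-p=v_0+(1-s)u+(1-t)w$ is in $\Z^2$ and has barycentric coordinates $s+t-1,\ 1-s,\ 1-t$ with respect to $v_0,v_1,v_2$, all strictly positive, so $p'$ lies in the interior of $T$---again a contradiction. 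Therefore $[\Z^2:\Lambda]=1$ and $\operatorname{area}(T)=\tfrac12$, and assembling the three steps proves the theorem.

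I expect the genuine difficulty to be concentrated in the last step: that an ``empty'' lattice triangle has area exactly $\tfrac12$. This is where the arithmetic of $\Z^2$ really enters, via the identification of the index $[\Z^2:\Z u+\Z w]$ with $|\det(u,w)|$ and with the number of lattice points in a fundamental parallelogram, and where the point-reflection trick is what disposes of the awkward case $s+t>1$; the additivity bookkeeping and the termination of the subdivision are routine. (An alternative route triangulates $P$ into primitive triangles using \emph{all} lattice points of the closed polygon as vertices and reads the formula off Euler's relation $V-E+F=2$ together with the per-triangle area $\tfrac12$; it rests on the same area computation and needs slightly more care to make the triangulation a bona fide planar subdivision.)
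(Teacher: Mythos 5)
Your proof is correct, but there is nothing in the paper to compare it against: Pick's theorem is stated here purely as a quoted classical result, with a citation to Gr\"unbaum--Shephard, and is used as a black box in the count of periodic points (Lemma~2.3); the authors give no proof of it. Your argument is the standard self-contained one, and the details check out. The additivity bookkeeping $i=i_1+i_2+(k-2)$, $b=b_1+b_2-2k+2$ is right, and combined with the existence of a triangulation of a simple polygon by diagonals it does reduce the statement to lattice triangles; the termination of the refinement into primitive triangles via the bound (number of pieces) $\le 2\,\mathrm{area}$ is sound because every lattice triangle has area in $\tfrac12\Z_{>0}$. The heart of the matter is, as you say, the primitive case: the identification of $|\det(u,w)|$ with $[\Z^2:\Lambda]$, $\Lambda=\Z u+\Z w$, and with $\#(\Z^2\cap Q)$ for the half-open fundamental parallelogram $Q$ (this uses $\Lambda\subseteq\Z^2$ and $v_0\in\Z^2$, both available), and then the case analysis: for $s+t\le 1$ the extra lattice point lies in $T$ and the strict inequalities $s<1$, $t<1$ rule out its being $v_1$ or $v_2$, while for $s+t>1$ the reflected point $v_1+v_2-p$ has barycentric coordinates $(s+t-1,\,1-s,\,1-t)$, all strictly positive, hence is an interior lattice point --- contradicting primitivity in either case, so the index is $1$ and the area is $\tfrac12$, which matches $i+\tfrac b2-1$ with $i=0$, $b=3$. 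So your write-up supplies a complete proof of a result the paper deliberately leaves to the literature; the only caveat worth recording is the one you already flag, namely that the statement (and your proof) concerns simple polygons.
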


\begin{proof}[Proof of Lemma~\ref{number:periodic}]
  Notice that the number of periodic points is the same as the number
  of the solutions of \eqref{equation:periodic}, which equals to
  the number of the integer lattice points in
  $(A^n-I) \mathbb{T}^2$.

  We assume that
  $A^n-I= \bigl[ \begin{smallmatrix} a_n & b_n \\ c_n &
    d_n \end{smallmatrix} \bigr]$. Then $(A^n-I) \mathbb{T}^2$ is
  the parallelogram with coordinates
  \[
    (0,0), \,(a_n,c_n), \,(b_n,d_n), \,(a_n+b_n,c_n+d_n).
  \]
  Denote the segment linking $(0,0)$ to $(a_n,c_n)$ by $L_1$, the
  segment linking $(0,0)$ to $(b_n,d_n)$ by $L_2$, the segment
  linking $(b_n,d_n)$ to $(a_n+b_n,c_n+d_n)$ by $L_3$, and the segment
  linking $(a_n,c_n)$ to $(a_n+b_n,c_n+d_n)$ by $L_4$.

  Here we suppose that $i$ is the number of integer points
  interior of $(A^n-I) \mathbb{T}^2$, and $N_k$ is the number of
  integer points on $L_k$, $k=1,2,3,4$. Then the number of
  integer points on the boundary of $(A^n-I) \mathbb{T}^2$ is
  $\sum_kN_k-4$. Note that $N_1=N_3$ and $N_2=N_4$.  Hence the
  number of integer lattices in $(A^n-I) \mathbb{T}^2$ is
  $i+N_1+N_2-3$.

  By Pick's Theorem, we have
  \[
    \mathcal{L} ((A^n-I) \mathbb{T}^2)=i+
    (\sum_kN_k-4)/2-1=i+N_1+N_2-3.
  \]
  Since
  $\mathcal{L} ((A^n-I) \mathbb{T}^2)=|a_nd_n-c_nb_n|=|\!\det
  (A^n-I) |$, we have the number of integer lattices in
  $(A^n-I) \mathbb{T}^2$ equals to $|\!\det(A^n-I) |$, that is,
  $|H_n|$.
\end{proof}

\subsection{Periodic points when $\boldsymbol{n}$ is odd}

To obtain the lower bound on $\hdim E$, it suffices to study the
distribution of periodic points when $n$ is odd.  In the
following, we only consider the case when $n$ is odd. Assume that
$n=2k+1$. Put
\[
  S_k = \lambda^k + \ldots + \lambda + 1 + \lambda^{-1} + \ldots
  + \lambda^{-k} = \frac{\lambda^{-k} - \lambda^{k+1}}{1-
    \lambda}.
\]
Notice that
$S_k = 1 + \tr D + \ldots + \tr D^k = 1 + \tr A + \ldots + \tr
A^k$, where $\tr D $ denotes the trace of $D$. Hence $S_k$ is an
integer.

\begin{lem}
  \label{lem:Hn}
  For every $n=2k+1$, $H_n=- (\tr A - 2) S_k^2.$
\end{lem}

\begin{proof}
  For every $k\ge0$, we have
  \begin{align}
    \nonumber
    S_k^2
    & = \frac{(\lambda^{k+1} - \lambda^{-k})^2}{(\lambda-1)^2}
      = \frac{\lambda^{2k+2} + \lambda^{-2k} - 2
      \lambda}{(\lambda-1)^2} \\
    \label{skhn}
    & = \frac{\lambda}{(\lambda - 1)^2} (\lambda^{2k+1} +
      \lambda^{-2k-1}-2) = -
      \frac{\lambda}{(\lambda-1)^2}H_{2k+1}.
  \end{align}
  For $k=0$, we have $1=S_0^2=- \frac{\lambda}{(\lambda-1)^2}H_1$,
  also $H_1=2- \lambda- \lambda^{-1}=2- \tr A$.
  So 
  \begin{equation}
    \label{det1}
    1= \frac{\lambda}{(\lambda-1)^2} (\tr A - 2).
  \end{equation}
  It follows from \eqref{skhn} and \eqref{det1} that
  $H_n=- (\tr A - 2)S_k^2$.
\end{proof}



Since we are assuming that $\lambda > 1$, we have $\tr A -
2>0$. Let $S_k'= (\tr A - 2) S_k$. By the same assumption, we
have $S_k' > 0$. Write
\[
  \mathscr{P}_o =
  \Bigl\{\Bigl(\frac{m}{S_k'}, \frac{j}{S_k'} \Bigr) : 0\le m,j\le
  S_k'-1\Bigr\}.
\]
  
\begin{lemma} \label{per:contain}
  For $n=2k+1$, $k\ge0$, the periodic points with period $n$ are
  contained in $\mathscr{P}_o$.
\end{lemma}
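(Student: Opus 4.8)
The plan is to identify the set of period-$n$ points with the finite subgroup $(A^n-I)^{-1}\mathbb{Z}^2/\mathbb{Z}^2$ of $\mathbb{T}^2$ and to show that this subgroup is contained in $(S_k')^{-1}\mathbb{Z}^2/\mathbb{Z}^2$, which is exactly $\mathscr{P}_o$. Indeed, if $x$ is a period-$n$ point then, choosing a representative in $\mathbb{R}^2$, equation~\eqref{equation:periodic} says $(A^n-I)x\in\mathbb{Z}^2$, i.e.\ $x=(A^n-I)^{-1}v$ with $v\in\mathbb{Z}^2$; so it suffices to prove that $S_k'(A^n-I)^{-1}$ is an integer matrix, for then $S_k'x\in\mathbb{Z}^2$ and $x\in\mathscr{P}_o$ modulo $\mathbb{Z}^2$.

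First I would linearise $A^n-I$ in $A$. Since $\det A=1$, Cayley--Hamilton gives $A^2=(\tr A)A-I$, hence $A^m=p_mA-p_{m-1}I$ where $p_0=0$, $p_1=1$ and $p_{m+1}=(\tr A)p_m-p_{m-1}$; these $p_m$ are integers, and evaluating on the eigenvectors of $A$ gives $\lambda^{\pm m}=p_m\lambda^{\pm1}-p_{m-1}$, so that $p_m=(\lambda^m-\lambda^{-m})/(\lambda-\lambda^{-1})$. In particular, for $n=2k+1$,
\[
  A^n-I=p_nA-(p_{n-1}+1)I .
\]

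The hard part will be the pair of divisibilities $S_k\mid p_n$ and $S_k\mid p_{n-1}+1$. For this I would introduce
\[
  U_j:=\sum_{i=-j}^{j}(-1)^{i+j}\lambda^i=\frac{\lambda^{j+1}+\lambda^{-j}}{\lambda+1},
\]
which is a Laurent polynomial in $\lambda$ symmetric under $\lambda\mapsto\lambda^{-1}$ with integer coefficients, hence---since $\lambda+\lambda^{-1}=\tr A\in\mathbb{Z}$---an integer. Using the closed forms for $p_m$, for $U_j$, and $S_k=(\lambda^{k+1}-\lambda^{-k})/(\lambda-1)$, a short computation gives
\[
  p_n=S_kU_k,\qquad p_{n-1}+1=S_kU_{k-1};
\]
this is exactly where the oddness of $n$ enters, since it is what makes $\lambda+1$ divide $\lambda^{n}+1$. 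Consequently $A^n-I=S_kM$ with $M:=U_kA-U_{k-1}I$ an integer matrix, and taking determinants and invoking Lemma~\ref{lem:Hn} gives $\det M=H_n/S_k^2=-(\tr A-2)=2-\tr A$.

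To finish: $M$ is an integer matrix with $|\det M|=\tr A-2\ne0$, so by Cramer's rule $M^{-1}=(\det M)^{-1}\,\mathrm{adj}\,M$ has all entries in $(\tr A-2)^{-1}\mathbb{Z}$, and hence $(A^n-I)^{-1}=S_k^{-1}M^{-1}$ has all entries in $((\tr A-2)S_k)^{-1}\mathbb{Z}=(S_k')^{-1}\mathbb{Z}$. Thus $(A^n-I)^{-1}\mathbb{Z}^2\subseteq(S_k')^{-1}\mathbb{Z}^2$, and reducing modulo $\mathbb{Z}^2$ shows that every period-$n$ point lies in $\mathscr{P}_o$. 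I expect the only genuine obstacle to be producing the integer cofactors $U_k$ and $U_{k-1}$ in the two displayed identities---that is, establishing $S_k\mid p_n$ and $S_k\mid p_{n-1}+1$; everything after that is routine bookkeeping with $2\times2$ integer matrices.
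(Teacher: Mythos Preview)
Your argument is correct. It reaches the same target as the paper---showing that $S_k'(A^{2k+1}-I)^{-1}$ is an integer matrix---and even uses the same auxiliary integers (your $U_j$ is precisely the paper's $R_j=\frac{\lambda^{j+1}+\lambda^{-j}}{\lambda+1}$), but the route is genuinely different. The paper diagonalises $A$, writes out all four entries of $(\tr A-2)S_k(A^{2k+1}-I)^{-1}$ explicitly, and then checks one by one that they equal $R_{k-1}-aR_k$, $bR_k$, $cR_k$, with the remaining entry handled via a trace computation. You instead work in $\mathbb{Z}[A]$: Cayley--Hamilton collapses $A^n-I$ to $p_nA-(p_{n-1}+1)I$, the two factorisations $p_n=S_kU_k$ and $p_{n-1}+1=S_kU_{k-1}$ pull out the common factor $S_k$, and Cramer's rule on the integer matrix $M=U_kA-U_{k-1}I$ (whose determinant is $2-\tr A$ by Lemma~\ref{lem:Hn}) finishes. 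Your approach is cleaner---no entrywise bookkeeping---and as a bonus the factorisation $A^{2k+1}-I=S_kM$ with $M\in M_2(\mathbb{Z})$ immediately gives the companion Lemma~\ref{per:contained} as well, since it shows $\frac{1}{S_k}\mathbb{Z}^2$ is sent into $\mathbb{Z}^2$ by $A^{2k+1}-I$.
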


\begin{proof}
  We prove that $A_{2k+1} := (\tr A - 2) S_k (A^{2k+1}-I)^{-1}$
  is an integer matrix.  If so, any periodic point
  $x=(A^{2k+1}-I)^{-1}[m_1 \ m_2]^t$ for $m_1,m_2\in\mathbb{Z}$,
  can be rewritten as
  \[x=\frac{1}{S_k'}A_{2k+1}[m_1 \ m_2]^t.\]
  Since $A_{2k+1}[m_1 \ m_2]^t$ is an integer vector, this implies that $x\in \mathscr{P}_o$.
  

  Since $H_n = -(\tr A - 2) S_k^2$ by Lemma~\ref{lem:Hn}, we have that
  \begin{multline*}
      A_{2k+1} \\
      = \! \textstyle{\frac{-1}{S_k}} \!
      \begin{bmatrix}
        \frac{(\lambda^{-1}-a) \lambda^{-n+1} - (\lambda-a)
          \lambda^{n+1}}{1- \lambda^2} - 1
        & \frac{b( \lambda^{n+1} - \lambda^{-n+1})}{1 -
          \lambda^2} \\
        \frac{- (\lambda^{-1}-a) (\lambda-a) \lambda^{n+1} +
          (\lambda-a) (\lambda^{-1} - a) \lambda^{-n+1}}{b(1 -
          \lambda^2)}
        &\frac{(\lambda^{-1} - a) \lambda^{n+1} - (\lambda-a)
          \lambda^{-n+1}}{1- \lambda^2} {\scriptstyle-1}
      \end{bmatrix}.
  \end{multline*}
  Put
  \begin{equation}\label{rk}
    \begin{split}
      R_k&= \lambda^k - \lambda^{k-1} + \ldots + (-1)^{k} +
      (-1)^{k-1} \lambda^{-1} + \ldots -
      \lambda^{-k+1} + \lambda^{-k} \\
      &= \frac{\lambda^{k+1}+ \lambda^{-k}}{1+ \lambda}.
    \end{split}
  \end{equation}
  Since
  $R_k= (-1)^{k} + (-1)^{k-1} \tr A + \ldots - \tr A^{k-1} + \tr
  A^k$, we have that $R_k$ is an integer.
  
    We have
  \begin{align*}
    \tr A_{2k+1} &= \frac{\lambda^{2k + 1} - 1}{S_k} +
                   \frac{\lambda^{-2k - 1} - 1}{S_k} \\
                 &= (\lambda - 1) \lambda^k - \lambda^{-2k - 1}
                   \frac{\lambda^{2k + 1} - 1}{S_k} \\
                 &= (\lambda - 1) (\lambda^k - \lambda^{-k - 1}).
  \end{align*}
  Moreover,
  \begin{align*}
    \det A_{2k+1} &=
                    \frac{(\lambda^{2k+1}-1)(\lambda^{-2k-1}-1)}{S_k^2} \\
                  &= (\lambda - 1)^2 \frac{2 - \lambda^{2k+1} -
                    \lambda^{-2k-1}}{(\lambda^{k+1} - \lambda^{-k})^2}
                    \lambda^k 
                    = - \frac{(\lambda - 1)^2}{\lambda}.                  
  \end{align*}
  Hence, $\tr A = - \det (A_{2k+1}) S_k$. Since $\det A_{2k+1}$
  does not depend on $k$, we have
  $\det A_{2k+1} = \det A_1 = \det (A-I) = 2 - \tr A$, which is
  an integer. It follows that $\tr A_{2k+1} = (2 - \tr A) S_k$ is
  an integer as well.

  Finally, we let $e_1 = [\begin{matrix} 1 & 0 \end{matrix}]^t$ and
  $e_2 = [\begin{matrix} 0 & 1 \end{matrix}]^t$, compute
  $e_2^t A_{2k+1} e_2$, $e_2^t A_{2k+1} e_1$ and $e_1^t A_{2k+1} e_2$ and show that they
  are integers.  After some simplifications, we obtain that
  \begin{align}
    \nonumber
    e_2^tA_{2k+1}e_2
    &= - \frac{1}{S_k} \Bigl(
      \frac{(\lambda^{-1}-a) \lambda^{n+1} -
      (\lambda-a) \lambda^{-n+1}}{1 - \lambda^2} - 1 \Bigr) \\
    \nonumber
    &= \frac{1}{1 + \lambda} \Bigl( \frac{\lambda^{2k+1} -
      \lambda^{-2k+1} + \lambda^2-1}{\lambda^{1+k} -
      \lambda^{-k}} + a\frac{\lambda^{-2k} -
      \lambda^{2k+2}}{\lambda^{1+k} - \lambda^{-k}} \Bigr) \\
    \nonumber
    &= \frac{1}{1+ \lambda} \Bigl( \frac{(\lambda^{1+k} -
      \lambda^{-k}) (\lambda^{k} +
      \lambda^{-k+1})}{\lambda^{1+k} - \lambda^{-k}} -a
      \frac{\lambda^{2k + 2} - \lambda^{-2k}}{\lambda^{1 + k} -
      \lambda^{-k}} \Bigr) \\
    \nonumber
    &= \frac{1}{1 + \lambda} \Bigl( \lambda^{k} + \lambda^{-k+1}
      - a(\lambda^{k+1} + \lambda^{-k})   \Bigr) \\
    &=R_{k-1}-aR_k,
      \label{p1}
  \end{align}
  and
  \begin{equation}
    \label{p2}
    \begin{split}
      e_1^tA_{2k+1}e_2&=- \frac{1}{S_k} \frac{b( \lambda^{n+1}-
        \lambda^{-n+1})}{1- \lambda^2} \\
      &= \frac{\lambda-1}{\lambda^{-k}- \lambda^{k+1}} \frac{b(
        \lambda^{2k+2}- \lambda^{-2k})}{1- \lambda^2} \\
      &= \frac{b}{1+ \lambda} \frac{\lambda^{-2k}-
        \lambda^{2k+2}}{\lambda^{-k}- \lambda^{k+1}} \\
      &= \frac{b}{1+ \lambda} (\lambda^{-k}+ \lambda^{k+1})=bR_k.
    \end{split}
  \end{equation}
  Also
  \begin{align}
    \nonumber
    e_2^tA_{2k+1}
    &e_1
      = \frac{1}{S_k} \frac{(\lambda^{-1}-a)
      (\lambda-a) \lambda^{n+1}- (\lambda-a) (\lambda^{-1}-a)
      \lambda^{-n+1}}{b(1- \lambda^2)} \\
    \nonumber
    &= \frac{1- \lambda}{\lambda^{-k}- \lambda^{k+1}}
      \frac{(\lambda^{-1}-a) (\lambda-a) \lambda^{n+1}-
      (\lambda-a) (\lambda^{-1}-a) \lambda^{-n+1}}{b(1-
      \lambda^2)} \\
    \nonumber
    &= \frac{1}{\lambda^{-k}- \lambda^{k+1}}
      \frac{(\lambda^{-1}-a) (\lambda-a) (\lambda^{2k+2}-
      \lambda^{-2k})}{b(1+ \lambda)} \\ 
    &=- \frac{(\lambda^{-1}-a) (\lambda-a) (\lambda^{k+1}+
      \lambda^{-k})}{b(1+ \lambda)}. 
    \label{p3}
  \end{align}

  Notice that $e_2^tA_{1}e_1=c$, that is
  \begin{equation*}
    \begin{split}
      c & = \frac{(\lambda^{-1}-a) (\lambda-a) \lambda^{2} -
        (\lambda-a) (\lambda^{-1}-a)}{b(1 - \lambda^2)} \\
      & = \frac{(\lambda^{-1}-a) (\lambda-a) (\lambda^{2} -
        1)}{b(1 - \lambda^2)} \\
      & = - \frac{(\lambda^{-1}-a) (\lambda-a)}{b}.
    \end{split}
  \end{equation*}
  Then \eqref{p3} can be rewritten as
  \begin{equation}
    \label{p4}
    e_2^tA_{2k+1}e_1=cR_k.   
  \end{equation}
  From \eqref{p1}, \eqref{p2} and \eqref{p4}, we have that the
  matrix elements $e_2^tA_{2k+1}e_1$, $e_1^tA_{2k+1}e_2$,
  $e_2^tA_{2k+1}e_2$ are all integers.   Since $\tr A_{2k+1}$ is an integer, the lower right element of $A_{2k+1}$ must be an integer
  as well.
\end{proof}


\begin{lemma}
  \label{per:contained}
  For $n=2k+1$, $k\ge0$, the points
  $\bigl\{\, \bigl( \frac{m}{S_k}, \frac{j}{S_k} \bigr) : m,j=0,
  \ldots, S_k-1 \,\bigr\}$ are periodic points.
\end{lemma}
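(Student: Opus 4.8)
The statement is equivalent to the assertion that $\tfrac{1}{S_k}(A^{2k+1}-I)$ is an integer matrix. Indeed, if this holds, then for every $v=[\begin{matrix} m & j\end{matrix}]^{t}$ with $m,j\in\Z$ we get
\[
  (A^{2k+1}-I)\bigl(\tfrac{m}{S_k},\tfrac{j}{S_k}\bigr)^{t}
  =\bigl[\tfrac{1}{S_k}(A^{2k+1}-I)\bigr]v\in\Z^{2},
\]
so that $\bigl(\tfrac{m}{S_k},\tfrac{j}{S_k}\bigr)$ satisfies \eqref{equation:periodic} and is a periodic point of period $n=2k+1$. We may assume $k\ge1$, the case $k=0$ being trivial since $S_0=1$ and $A-I\in M_2(\Z)$.

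I would prove the matrix divisibility via the Cayley--Hamilton recursion. Since $\det A=1$, we have $A^{2}=(\tr A)A-I$, hence $A^{m}=U_m A-U_{m-1}I$ for all $m\ge1$, where $U_0=0$, $U_1=1$, $U_{m+1}=(\tr A)U_m-U_{m-1}$; in particular each $U_m$ is an integer and $U_m=(\lambda^{m}-\lambda^{-m})/(\lambda-\lambda^{-1})$. Taking $m=2k+1$ gives
\[
  A^{2k+1}-I=U_{2k+1}A-(U_{2k}+1)I .
\]
Using the closed forms $S_k=(\lambda^{k+1}-\lambda^{-k})/(\lambda-1)$ and \eqref{rk}, together with $\lambda^{2}-1=(\lambda-1)(\lambda+1)$ to clear denominators, a direct computation yields the two identities $U_{2k+1}=S_kR_k$ and $U_{2k}+1=S_kR_{k-1}$. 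Since $R_k$ and $R_{k-1}$ are integers (as shown in the proof of Lemma~\ref{per:contain}), it follows that $\tfrac{1}{S_k}(A^{2k+1}-I)=R_kA-R_{k-1}I$ is an integer matrix, which completes the argument.

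An alternative, essentially computation-free, derivation of the divisibility is available from Lemma~\ref{per:contain}: there $A_{2k+1}:=(\tr A-2)S_k(A^{2k+1}-I)^{-1}$ is shown to be an integer matrix with $\det A_{2k+1}=2-\tr A$, so inverting gives
\[
  A^{2k+1}-I=(\tr A-2)S_k A_{2k+1}^{-1}
  =(\tr A-2)S_k\,\frac{\operatorname{adj}A_{2k+1}}{\det A_{2k+1}}
  =-S_k\operatorname{adj}A_{2k+1},
\]
and $\operatorname{adj}A_{2k+1}$ has integer entries. The only genuine computation in either route is the pair of polynomial identities for $U_{2k+1}$ and $U_{2k}+1$ (equivalently, the entries of $\operatorname{adj}A_{2k+1}$), and these are routine manipulations of geometric sums, so I anticipate no real obstacle. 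I would note, as a sanity check, that combined with Lemma~\ref{per:contain} and the count $|H_n|=(\tr A-2)S_k^{2}=(\tr A-2)\cdot\#\bigl(\tfrac1{S_k}\Z^2/\Z^2\bigr)$ from Lemma~\ref{lem:Hn}, this identifies the period-$n$ lattice as exactly a $1/(\tr A-2)$ fraction of the full set of periodic points of period $n$.
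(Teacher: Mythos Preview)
Your proof is correct. Your second approach---inverting $A_{2k+1}$ via the adjugate and using $\det A_{2k+1}=2-\tr A$ to obtain $\tfrac1{S_k}(A^{2k+1}-I)=-\operatorname{adj}A_{2k+1}$---is precisely the paper's argument, only expressed more cleanly: the paper spells out the same adjugate relation entry by entry, observing that each entry of $A'_{2k+1}:=\tfrac1{S_k}(A^{2k+1}-I)$ coincides (up to sign and position) with an entry of $A_{2k+1}$ already shown to be an integer in Lemma~\ref{per:contain}.

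Your first approach via Cayley--Hamilton is a genuinely different route and arguably more transparent. Writing $A^{2k+1}-I=U_{2k+1}A-(U_{2k}+1)I$ and checking the two scalar identities $U_{2k+1}=S_kR_k$ and $U_{2k}+1=S_kR_{k-1}$ yields the explicit formula $\tfrac1{S_k}(A^{2k+1}-I)=R_kA-R_{k-1}I$, which makes the integrality immediate and avoids any reference to the inverse matrix or to the computations in Lemma~\ref{per:contain}. The trade-off is that the paper's adjugate argument recycles work already done, whereas your Cayley--Hamilton argument is self-contained but requires verifying two new product identities (which, as you note, are routine). Either way the result is the same; your Cayley--Hamilton version would in fact allow one to prove Lemmas~\ref{per:contain} and~\ref{per:contained} simultaneously and more briefly.
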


\begin{proof}

  We prove that $A_{2k + 1}' := \frac{1}{S_k} (A^{2k + 1} - I)$ is
  an integer matrix, with $\det A_{2k+1}' = 2 - \tr A$. This
  implies that if $x = \frac{1}{S_k} [x_1\ x_2]^t$ where $x_1$
  and $x_2$ are integers, then $(A^{2k+1} - I) x$ is an integer
  matrix, and hence $T^{2k+1} (x) = x$.

  Notice that \[
    e_1^t A_{2k+1}' e_1 = -e_2^t A_{2k+1} e_2,\quad  e_2^t A_{2k+1}' e_2 = -e_1^t A_{2k+1} e_1,
  \]
  \[
    e_1^t A_{2k+1}' e_2 = e_1^t A_{2k+1} e_2,\quad   e_2^t A_{2k+1}' e_1 = e_2^t A_{2k+1} e_1,
  \]
  where $A_{2k+1}=(trA-2)S_k(A^{2k+1}-I)^{-1}$. From the proof of
  Lemma~\ref{per:contain}, we have that the matrix elements of
  $A_{2k+1}'$ are all integers.
\end{proof}

\begin{remark}
  The number of periodic points found in Lemma~\ref{per:contained}
  is clearly $S_k^2$. This is enough for our needs, but we note
  that by Lemma~\ref{number:periodic}, the number of periodic
  points are $|H_{2k+1}|$ and by \eqref{skhn},
  $S_k^2 = - \frac{\lambda}{(\lambda-1)^2} H_{2k+1}$. Since
  $\frac{\lambda}{(\lambda-1)^2} = \frac{1}{\tr A - 2}$, we
  therefore have
  \[
    S_k^2 = - \frac{1}{\tr A - 2} H_{2k+1}.
  \]
  Hence the periodic points found in Lemma~\ref{per:contained}
  are all the periodic points if and only if $\tr A - 2 = \pm 1$.
\end{remark}

\begin{remark}
\textbf{When $\boldsymbol{n}$ is even}, that is, $n=2k$, $k\ge1$,  the periodic points with period $n$ are
  contained in
  \[
    \mathscr{P}_e = \Bigl\{ \Bigl( \frac{m}{g_k}, \frac{j}{g_k}
    \Bigr) : 0 \le m,j \le g_k - 1 \, \Bigr\},
  \]
  where $g_k= \sqrt{(a+d)^2-4} (\lambda^k- \lambda^{-k})$.
\end{remark}

\begin{nt} \label{ellipsoid} Let
  $A = \bigl[ \begin{smallmatrix} a & b \\ c &
    d \end{smallmatrix} \bigr]$ be an integer matrix with
  $\det A=1$ and an eigenvalue $\lambda>1$. From
  Lemma~\ref{number:periodic}, given $n\ge1$, the number of
  periodic points with \eqref{equation:periodic} is
  $|H_n|$. Write
  \[
    \lambda_{n,1}=r_n\frac{\lambda^n-1}{|H_n|} = \frac{r_n}{1 -
      \lambda^{-n}}, \quad \lambda_{n,2}=r_n\frac{1-
      \lambda^{-n}}{|H_n|} = \frac{r_n}{\lambda^n - 1}.
  \]
  Then $\E_n$ consists of the elliptical discs
  $\{\E_n^i\}_{i=1}^{|H_n|}$, and for $i\ge1$, $\E_n^i$ contains
  a parallelogram $E_n^i$ with vertices
  \begin{align*}
    & x_{n,i} + \begin{bmatrix} 1 & \frac{\lambda-a}{b}
      \end{bmatrix}^t \lambda_{n,2},
    & x_{n,i} - \begin{bmatrix} 1 &
      \frac{\lambda-a}{b} \end{bmatrix}^t \lambda_{n,2}, \\
    & x_{n,i} + \begin{bmatrix} 1 &
      \frac{\lambda^{-1}-a}{b} \end{bmatrix}^t \lambda_{n,1},
    & x_{n,i} - \begin{bmatrix} 1 &
      \frac{\lambda^{-1}-a}{b} \end{bmatrix}^t \lambda_{n,1},
  \end{align*}
  whose centre $x_{n,i}$ is a periodic point satisfying
  \eqref{equation:periodic}. Also the lengths of the diagonal
  lines of $E_n^i$ are $2\lambda_{n,1}$ and $2\lambda_{n,2}$.
    \end{nt}

\section{The upper bound on $\hdim E$} \label{updim}

In this section, we will give the upper bound on $\hdim E$. 

\begin{lem}
  \label{upperdimension}
  Let $A$ be a $2 \times 2$ integer matrix with $\det A=1$ and an
  eigenvalue $\lambda>1$. Let $T(x) = Ax \pmod 1$, and for
  $n\ge1$, $r_n=e^{- \alpha n}$, $\alpha\ge0$. Then
  \[\hdim E\le s_0, \]
  where
  \[
    s_0= \min \Bigl\{\frac{2\log\lambda}{\alpha+ \log\lambda},
    \frac{\log\lambda}{\alpha} \Bigr\}.
  \]
\end{lem}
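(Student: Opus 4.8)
The plan is to realize $E = \limsup_n \E_n$ as a limsup set and bound its Hausdorff dimension using the standard covering argument: for any $N$, the collection $\bigcup_{n \ge N} \{ \E_n^i \}_i$ is a cover of $E$, so $\hdim E \le \liminf_n (\text{exponent controlling } \sum_i (\operatorname{diam} \E_n^i)^s)$. By Notation~\ref{ellipsoid}, $\E_n$ consists of $|H_n|$ elliptical discs, each with semi-axes comparable to $\lambda_{n,1} = r_n/(1-\lambda^{-n}) \asymp r_n = e^{-\alpha n}$ (the long axis) and $\lambda_{n,2} = r_n/(\lambda^n - 1) \asymp e^{-(\alpha + \log\lambda)n}$ (the short axis). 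Since $|H_n| = \lambda^n + \lambda^{-n} - 2 \asymp \lambda^n = e^{n\log\lambda}$, the total number of ellipses grows like $e^{n\log\lambda}$.

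First I would handle the two regimes separately, corresponding to the two terms in the minimum. The point is that covering an ellipse with long axis $\ell_1 = e^{-\alpha n}$ and short axis $\ell_2 = e^{-(\alpha+\log\lambda)n}$ efficiently for a given exponent $s$ requires choosing the right shape of covering sets: one may either cover each ellipse by a single ball of radius $\asymp \ell_1$, or chop it into roughly $\ell_1/\ell_2 \asymp e^{n\log\lambda}$ balls of radius $\asymp \ell_2$. In the first case the $s$-cost per ellipse is $\asymp \ell_1^s = e^{-s\alpha n}$ and the total cost over $\E_n$ is $\asymp e^{n\log\lambda} e^{-s\alpha n} = e^{n(\log\lambda - s\alpha)}$, which stays bounded as $n \to \infty$ precisely when $s \ge \log\lambda/\alpha$; this gives the bound $\hdim E \le \log\lambda/\alpha$. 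In the second case the cost per ellipse is $\asymp (\ell_1/\ell_2)\,\ell_2^s = e^{n\log\lambda} e^{-s(\alpha+\log\lambda)n}$, and the total over $\E_n$ is $\asymp e^{2n\log\lambda} e^{-s(\alpha+\log\lambda)n} = e^{n(2\log\lambda - s(\alpha+\log\lambda))}$, which is bounded when $s \ge 2\log\lambda/(\alpha+\log\lambda)$; this gives $\hdim E \le 2\log\lambda/(\alpha+\log\lambda)$. Taking whichever cover is cheaper for the given $s$ yields $\hdim E \le s_0 = \min\{2\log\lambda/(\alpha+\log\lambda), \log\lambda/\alpha\}$. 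I would make this rigorous by fixing $s > s_0$, choosing the appropriate covering family for all $n \ge N$, noting $\sum_{n\ge N}(\text{cost of }\E_n) \to 0$ as $N \to \infty$ (since each term decays geometrically), and concluding $\mathcal H^s(E) = 0$, hence $\hdim E \le s$; letting $s \downarrow s_0$ finishes.

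One technical point to address is the boundary case where $\ell_2$ is so small that a covering ball of radius $\ell_2$ has diameter below the mesh used in the Hausdorff measure definition — but since we take $\delta \to 0$ anyway this is automatic, and the estimates $\lambda_{n,1} \asymp e^{-\alpha n} \to 0$, $\lambda_{n,2} \asymp e^{-(\alpha+\log\lambda)n}\to 0$ ensure all covering sets shrink. I should also be slightly careful that an ellipse is covered by $O(1 + \ell_1/\ell_2)$ balls of radius $\ell_2$; the "$+1$" is harmless since $\ell_1/\ell_2 = \lambda^n - 1 \ge 1$ for $n \ge 1$. The main obstacle — though it is really bookkeeping rather than a genuine difficulty — is organizing the optimization over covering shapes cleanly so that the single expression $\min\{\cdot,\cdot\}$ emerges, i.e. verifying that for every $s > s_0$ at least one of the two covering strategies has total $s$-cost summable in $n$. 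This amounts to checking that $s > s_0$ implies either $s\alpha > \log\lambda$ or $s(\alpha+\log\lambda) > 2\log\lambda$, which is exactly the statement that $s_0$ is the minimum of the two thresholds, so no contradiction arises.
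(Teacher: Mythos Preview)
Your proposal is correct and follows essentially the same approach as the paper: the paper also splits into two cases, covering each elliptical disc $\E_n^i$ either by a single ball of radius $\lambda_{n,1}$ (yielding the bound $\log\lambda/\alpha$) or by $\asymp \lambda_{n,1}/\lambda_{n,2} \asymp \lambda^n$ small squares of side $2\lambda_{n,2}$ (yielding the bound $2\log\lambda/(\alpha+\log\lambda)$), and then takes the minimum. Your cost computations and the summability check match the paper's exactly.
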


\begin{proof}
  We observe that for any $n\ge1$,
  $E\subset \bigcup_{n\ge k} \E_n$.  Notice that $\E_n$ consists
  of $|H_n|$ elliptical discs with same shape. There are two
  natural coverings of $\E_n$.

  Case 1:\, For $\alpha>0$, the sequence $\{r_n\}_{n\ge1}$ is a
  sequence of positive real numbers which decreases to 0. Then
  for any $\delta>0$, there is some $k_0\ge1$ such that for any
  $k\ge k_0$, we have $r_k< \delta$. Hence
  \begin{equation}
    \label{naturalcovering}
    \mathcal{H}_{\delta}^s(E) \le \mathcal{H}_{\delta}^s
    (\bigcup_{n\ge k_0} \E_{n}) \le \sum_{n\ge k_0}
    \sum_{i=1}^{|H_n|} \mathcal{H}_{\delta}^s (\E_{n,i}).
  \end{equation}

  Since $\E_{n,i} \subset B(x_i, \lambda_{n,1})$, then
  \eqref{naturalcovering} can be rewritten as
  \begin{equation*}
    \begin{split}
      \mathcal{H}_{\delta}^s(E)&\le\sum_{n\ge k_0}
      \sum_{i=1}^{|H_n|} \mathcal{H}_{\delta}^s(B(x_i,
      \lambda_{n,1})) \\ 
      &\lesssim \sum_{n\ge k_0}r_n^s\lambda^n= \sum_{n\ge
        k_0}e^{n(\log\lambda- \alpha s)}.
    \end{split}
  \end{equation*}
  Therefore $s>\frac{\log\lambda}{\alpha},$ we have
  $\mathcal{H}^s(E)< \infty$, which implies that
  $\hdim E\le \frac{\log\lambda}{\alpha}$.

  Case 2:\, For $\alpha>0$, we use some squares with length
  $2\lambda_{n,2}$ to cover $\E_{n,i}$, and denote these squares
  by $\{S_{i,j} \}_j$. We suppose that the centre of $S_{i,j}$ is
  $x_{i,j}$, and
  \[
    \Gamma_i := \#\{S_{i,j} \}
    \asymp\frac{\lambda_{n,1}}{\lambda_{n,2}}=
    \frac{\lambda^n-1}{1- \lambda^{-n}} \asymp \lambda^n.
  \]
  For any $\delta>0$, there is some $n_0\ge1$ such that for any
  $n\ge n_0$, we have $2\sqrt{2} \lambda_{n,2}< \delta$. Then  
  \begin{equation*}
    \begin{split}
      \mathcal{H}_{\delta}^s(E)&\le \mathcal{H}_{\delta}^s
      (\bigcup_{n\ge k_0} \E_{n}) \le \sum_{n\ge k_0}
      \sum_{i=1}^{|H_n|}
      \sum_{j=1}^{\Gamma_i} \mathcal{H}_{\delta}^s(S_{i,j}) \\
      &\lesssim \sum_{n\ge k_0} |H_n| \Gamma_i (r_n \lambda^{-n})^s\\
      &\lesssim \sum_{n\ge k_0} \lambda^{2n} (r_n\lambda^{-n})^s
      = \sum_{n\ge k_0}e^{n(2\log
        \lambda- (\alpha+ \log\lambda)s)}.
    \end{split}
  \end{equation*}
  Therefore for any $s>\frac{2\log\lambda}{\alpha+ \log\lambda},$
  we have $\mathcal{H}^s(E)< \infty$, which implies that $\hdim
  E\le \frac{2\log\lambda}{\alpha+ \log\lambda}$.

  Combining Cases 1 and 2, we have 
  \[
    \hdim E\le \min
    \Bigl\{\frac{2\log\lambda}{\alpha+ \log\lambda},
    \frac{\log\lambda}{\alpha} \Bigr\}.
    \qedhere
  \]
\end{proof}

\section{Lower bound on Hausdorff dimension}

We define the $s$-dimensional Riesz potential of a measure $\mu$
by
\[
  R_s \mu (x) = \int |x-y|^{-s} \, \mathrm{d} \mu (y).
\]
The $s$-dimensional Riesz energy of $\mu$ is
\[
  I_s (\mu) = \int R_s \mu \, \mathrm{d} \mu = \iint |x-y|^{-s}
  \, \mathrm{d} \mu (x) \, \mathrm{d} \mu (y).
\]

Let $A$ be a $2 \times 2$ integer matrix with $\det A=1$ and an
eigenvalue $\lambda>1$. Lemma~\ref{per:contained} tells us that
$\bigl\{ \bigl( \frac{m}{S_k}, \frac{j}{S_k} \bigr) : m,j=0,
\ldots, S_k-1 \bigr\}$ are periodic points with period
$n=2k+1$. After re-enumeration, we denote
these periodic points by $\{x_{n,i} \}_{i=1}^{N_n}$, where
$N_n=S_k^2$.

For $n=2k+1$, define
\begin{equation}
  \label{en}
  E_n = \bigcup_{i=1}^{N_n} E_n^i,
\end{equation}
and recall Notation~\ref{ellipsoid} that $E_n^i\subset \E_n^i$ is the
parallelogram with centre $x_{n,i}$, and lengths of diagonal lines
$2\lambda_{n,1}$ and $2\lambda_{n,2}$. Note that $E_n\subset \bigcup_{j=1}^{|H_n|} \E_n^j$. Hence $\limsup_nE_n\subset E$.

Now we show that the shortest distance between $E_n^i$ and $E_n^j$, $i\ne j$ is positive.  For any pair $E_n^i$ and $E_n^j$, $i\ne j$, we assume that $x_{n,i}=(\frac{i_1}{S_k},\frac{i_2}{S_k}),~x_{n,j}=(\frac{j_1}{S_k},\frac{j_2}{S_k})$, then 
\[d(E_n^i,E_n^j)\ge d(x_{n,i},l_{n,j})-2r_n\lambda_{n,2},\]
where $d(x_{n,i},l_{n,j})$ is the distance between $x_{n,i}$ and the line $l_{n,j}$ given by $f(x)=\frac{\lambda^{-1}-a}{b}(x-\frac{j_1}{S_k})+\frac{j_2}{S_k}$. When $i_1=j_1$, we have $d(E_n^i,E_n^j)\asymp S_k^{-1}$. When $i_1\ne j_1$, since $\frac{\lambda^{-1}-a}{b}$ is an algebraic number of degree 2, by Liouville's theorem on diophantine approximation, we have 
\begin{equation*}
\begin{split}
d(x_{n,i},l_{n,j})&\gtrsim\Big|\frac{\lambda^{-1}-a}{b}\Big(\frac{i_1-j_1}{S_k}\Big)+\frac{j_2-i_2}{S_k}\Big|\\
&\ge \frac{c_1}{S_k|i_1-j_1|},
\end{split}
\end{equation*}
where $c_1>0$ is a constant only depending on $A$. Hence the distance between $E_n^i$ and $E_n^j$, $i\ne j$, satisfies
 \begin{align}\label{eq:liouville}
d(E_n^i, E_n^j) \left\{\begin{array}{cl}
 \gtrsim \frac{1}{S_k|i_1-j_1|} \gtrsim \lambda^{-n}>0& \quad \text{if~}i_1\ne j_1,\\[2ex]
\asymp S_k^{-1}\qquad\qquad\qquad&\quad \text{if~}i_1= j_1.
 \end{array}\right.
\end{align}

For large enough $n$, we have that $\{E_n^i\}_{i=1}^{N_n}$ do
not intersect each other. 
Therefore
\[
  \L(E_n^i)= \frac{\sqrt{(a+d)^2-4}}{|bH_n|}r_n^2, \quad \L(E_n)=
  \sum_{i=1}^{N_n} \L(E_n^i)= \frac{\pi }{|b(a+d-2)|}r_n^2.
\]

\subsection{Estimate when $\boldsymbol{\alpha}$ is large}

In this subsection, we give a lower bound on the Hausdorff
dimension of $E$ when $\alpha\ge\frac{1}{2} \log\lambda$.

\begin{theorem} \label{main2}
  For $\alpha\ge\frac{1}{2} \log\lambda$, let $E_n$ be defined as
  in \eqref{en}.  Then
  \[
    \limsup_{k\to\infty}E_{2k+1} \in\mathcal{G}^{s_0} (\T^2),
  \]
  where $s_0 = \min \bigl\{
  \frac{2\log\lambda}{\alpha+ \log\lambda}, \frac{\log\lambda}{\alpha}
  \bigr\}.$
\end{theorem}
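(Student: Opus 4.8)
The plan is to apply Lemma~\ref{lemma:persson} to the sequence of open sets $E_{2k+1}$ (or, more precisely, to the open ellipses $\E_{2k+1}^i$ containing the parallelograms $E_{2k+1}^i$, since the lemma requires open sets). I would construct, for each odd $n = 2k+1$, a natural candidate measure $\mu_n$ supported on $\bigcup_i E_n^i$: namely, spread unit mass uniformly over the $N_n = S_k^2$ parallelograms, putting mass $1/N_n$ on each and distributing it within $E_n^i$ in a way that is comparable to a rescaled copy of a fixed ``bump'' of dimension $s_0$. Concretely, inside each parallelogram (which has diameters $2\lambda_{n,1}$ in the expanding direction and $2\lambda_{n,2}$ in the contracting direction, with $\lambda_{n,1} \asymp r_n$ and $\lambda_{n,2} \asymp r_n \lambda^{-n}$ in the regime $\alpha \ge \tfrac12\log\lambda$ where $s_0 = \tfrac{2\log\lambda}{\alpha+\log\lambda}$), I put a measure that on the short scale $\lambda_{n,2}$ behaves like Lebesgue in the long direction but is concentrated so as to have the correct local scaling exponent $s_0$. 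The two hypotheses of Lemma~\ref{lemma:persson} then have to be checked.

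First I would verify the mass-distribution condition: $C^{-1} \le \liminf_n \mu_n(B) \le \limsup_n \mu_n(B) \le C$ for every ball $B$. The upper bound is the delicate direction. The periodic points $x_{n,i} = (i_1/S_k, j_1/S_k)$ sit on a $S_k \times S_k$ grid of spacing $\asymp \lambda^{-n/2}$ (since $S_k \asymp \lambda^{k} = \lambda^{(n-1)/2}$), and $\lambda_{n,1} \asymp r_n = e^{-\alpha n}$, which in the range $\alpha \ge \tfrac12\log\lambda$ is at most $\asymp \lambda^{-n/2}$, so the parallelograms are separated and a ball of radius $r$ meets $\lesssim (rS_k)^2 + 1$ of them. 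Combined with the within-parallelogram estimate (mass $\le (r/\lambda_{n,1})^{s_0} \cdot N_n^{-1}$ at small scales, total mass $N_n^{-1}$ at large scales), a short computation should give $\mu_n(B(x,r)) \lesssim r^{s_0}$ and hence, after normalising the bump so that $\mu_n$ is a probability measure, $\mu_n(B) \lesssim 1$; the lower bound $\mu_n(B) \gtrsim 1$ holds trivially once $\mu_n$ is a probability measure and is actually superfluous — I suspect the constant $C^{-1}$ in the lemma should just be read as $\liminf \mu_n(\T^2) = 1$, so it suffices to check $\mu_n$ are probability measures and the upper bound. The separation estimate \eqref{eq:liouville}, established above via Liouville's theorem, is exactly what guarantees the parallelograms do not clump and that the grid structure is genuine.

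Second I would bound the Riesz energy $I_{s_0}(\mu_n) = \iint |x-y|^{-s_0}\,d\mu_n(x)\,d\mu_n(y)$ uniformly in $n$. I split the integral into the diagonal contribution — pairs $(x,y)$ lying in the same parallelogram $E_n^i$ — and the off-diagonal contribution. For the diagonal part, the self-energy of the scaled bump on a single $E_n^i$ is $\asymp N_n^{-2}\lambda_{n,1}^{-s_0}$ when $s_0 < $ the relevant local exponent, and summing over the $N_n$ parallelograms gives $\asymp N_n^{-1}\lambda_{n,1}^{-s_0} \asymp \lambda^{-n} e^{\alpha s_0 n} = e^{n(\alpha s_0 - \log\lambda)}$, where I used $N_n = S_k^2 \asymp \lambda^{n}$; since $s_0 \le \log\lambda/\alpha$ this exponent is $\le 0$, so the diagonal part is bounded. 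For the off-diagonal part I use the mass bound $\mu_n(B(x,r)) \lesssim r^{s_0}$ together with the separation of parallelograms: fixing $x$ in $E_n^i$, the contribution of $y$ outside $E_n^i$ is $\int_{d(x_{n,i})}^{\sqrt2} r^{-s_0}\,d\mu_n(B(x,r)) \lesssim \int_0^{\sqrt2} r^{-s_0} r^{s_0 - 1}\,dr < \infty$ by a standard layer-cake / integration-by-parts argument, uniformly in $x$ and $n$. Integrating against $\mu_n(x)$ (a probability measure) keeps it bounded. Thus $I_{s_0}(\mu_n) < C$ for all $n$.

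With both hypotheses verified, Lemma~\ref{lemma:persson} applied to the open sets $\E_{2k+1}^i$ (which contain the supports of $\mu_{2k+1}$, so $\mu_n(\T^2 \setminus \bigcup_i \E_n^i) = 0$) yields $\limsup_k \bigl(\bigcup_i \E_{2k+1}^i\bigr) \in \mathcal{G}^{s_0}(\T^2)$; and since $E_{2k+1} = \bigcup_i E_{2k+1}^i \subset \bigcup_i \E_{2k+1}^i$ while the class $\mathcal{G}^{s_0}$ only needs the limsup set to contain the relevant configuration — here I would instead directly apply the lemma with open sets slightly shrunk so that their closures lie in the parallelograms, or note that $\mu_n$ can be supported on the open parallelogram interiors — we conclude $\limsup_k E_{2k+1} \in \mathcal{G}^{s_0}(\T^2)$. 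The main obstacle I anticipate is the bookkeeping for the upper mass bound $\mu_n(B(x,r)) \lesssim r^{s_0}$: one must correctly interpolate across the three scale ranges ($r \le \lambda_{n,2}$, $\lambda_{n,2} \le r \le \lambda_{n,1}$, $r \ge \lambda_{n,1}$) and across the grid spacing $S_k^{-1}$, choosing the within-parallelogram measure's local dimension to be exactly $s_0$ in the right direction so that all four regimes give the same power $r^{s_0}$ — this is precisely where the hypothesis $\alpha \ge \tfrac12\log\lambda$ (equivalently $s_0 = \tfrac{2\log\lambda}{\alpha+\log\lambda} \le 1$) is used, since for smaller $\alpha$ the relation between $\lambda_{n,1}$ and the grid spacing reverses and the parallelograms would overlap, forcing a different construction (presumably handled in a later subsection).
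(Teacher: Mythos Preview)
Your overall plan --- apply Lemma~\ref{lemma:persson} with a measure supported on the parallelograms, split the energy into diagonal and off-diagonal pieces, and use the grid/Liouville separation from \eqref{eq:liouville} --- is exactly the paper's strategy. Two points, however, differ in substance.

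\textbf{The measure.} The paper simply takes $\mu_n = \mathcal{L}|_{E_n}/\mathcal{L}(E_n)$, normalized Lebesgue on the union of parallelograms. Your ``bump of local dimension $s_0$'' is unnecessary: once $\alpha \ge \tfrac12\log\lambda$, the long axis $\lambda_{n,1}\asymp r_n$ fits inside $O(1)$ grid cells of side $S_k^{-1}\asymp\lambda^{-n/2}$, and plain Lebesgue already has the right scaling. The diagonal energy of a single parallelogram is then governed by the singular value function $\phi^s(E_n^i)$, and the paper handles the two cases $s_0<1$ (where $\phi^s=\lambda_{n,1}^s$) and $1\le s_0\le\tfrac43$ (where $\phi^s=\lambda_{n,1}\lambda_{n,2}^{s-1}$) separately. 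This is where you go astray: you assert that $\alpha\ge\tfrac12\log\lambda$ is equivalent to $s_0\le 1$, but in fact $s_0\le 1\iff\alpha\ge\log\lambda$; on the interval $\tfrac12\log\lambda\le\alpha<\log\lambda$ one has $s_0=\tfrac{2\log\lambda}{\alpha+\log\lambda}\in(1,\tfrac43]$, and a ``one-directional bump of dimension $s_0>1$'' does not make sense. Your diagonal estimate therefore only covers half of the stated range.

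\textbf{The first hypothesis of Lemma~\ref{lemma:persson}.} You write that the lower bound $\liminf_n\mu_n(B)\ge C^{-1}$ is ``trivially'' satisfied for probability measures and ``superfluous''. It is neither. That bound is exactly the equidistribution statement --- it forces the mass to spread over every ball, not concentrate --- and without it the lemma is false. In the paper this is verified (indeed with $C=1$) because the centres $x_{n,i}$ form a full $S_k\times S_k$ grid by Lemma~\ref{per:contained}, so $\mu_n(B)/\mathcal{L}(B)\to 1$ for every ball. Your construction would in fact satisfy it for the same reason, but you must argue it; dismissing it is a genuine gap. A related slip: your off-diagonal layer-cake bound $\int r^{-s_0}r^{s_0-1}\,dr$ diverges logarithmically at the critical exponent; the paper instead bounds $I_2$ directly using the grid geometry, and for your route you would need to work with $s<s_0$ and pass to the limit.
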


Since $\limsup_{k\to\infty}E_{2k+1} \subset E$, we conclude from
Theorem~\ref{main2} that $E\in\mathcal{G}^{s_0} (\T^2)$.


\begin{proof}[Proof of Theorem~\ref{main2}]
Let 
\[
  \mu_n= \frac{1}{\L(E_n)} \L|_{E_n}.
\]

Since the distribution of periodic points $\{x_{n,i} \}_i$ is very
regular, it is clear that there is a constant $C>1$ such that for
any ball $B\subset \T^2$, we have
\[
  C^{-1} \le \liminf_{k\to\infty} \frac{\mu_{2k+1} (B)}{\L(B)} \leq
  \limsup_{k\to\infty} \frac{\mu_{2k+1} (B)}{\L(B)} \le C.
\]
(In fact, we may take $C=1$.)

Now we show that $I_s(\mu_n)$ is finite for some $s>0$, here
$n=2k+1$.  Write
\begin{multline*}
  \iint\frac{1}{|x-y|^s} \, \mathrm{d} \mu_n(x) \, \mathrm{d}
  \mu_n(y)
  = I_1 + I_2 \\
  = \sum_{i=1}^{N_n} \int_{E_n^i} \int_{E_n^i}
  \frac{1}{|x-y|^s} \, \mathrm{d} \mu_n(x) \, \mathrm{d} \mu_n(y) \\
  + \sum_{i=1}^{N_n} \sum_{\substack{j=1 \\ j \ne i}}^{N_n}
  \int_{E_n^i} \int_{E_n^j} \frac{1}{|x-y|^s} \, \mathrm{d}
  \mu_n(x) \, \mathrm{d} \mu_n(y).
\end{multline*}

a) Let $\phi^s(\E)$ be the singular value function of the
ellipse $\E$. Then
\begin{equation*}
  \begin{split}
    I_1&= \sum_{i=1}^{N_n} \frac{1}{\L(E_n)^2} \int_{E_n^i}
    \int_{E_n^i} \frac{1}{|x-y|^s} \, \mathrm{d}x\, \mathrm{d}y\\
    &\lesssim \frac{1}{r_n^4}
    \sum_{i=1}^{N_n}K\frac{\L(E_n^i)^2}{\phi^s(E_n^i)}=
    \frac{1}{r_n^4}
    \sum_{i=1}^{N_n}K\frac{\pi^2r_n^4}{S_k^4\phi^s(E_n^i)} \\ 
    &=S_k^{-2} \phi^{-s} (E_n^i).
  \end{split}
\end{equation*}

b) \, Now we estimate
$I_2= \sum_{i=1}^{N_n} \sum_{\substack{j = 1 \\ j\ne i}}^{N_n}
\int_{E_n^i} \int_{E_n^j} \frac{1}{|x-y|^s} \, \mathrm{d}
\mu_n(x) \, \mathrm{d} \mu_n(y)$.

It suffices to prove that the Riesz potential
\[
  R (x)= \sum_{j\ne i_0} \int_{E_n^j} \frac{1}{|x-y|^s} \,
  \mathrm{d} \mu_n(y)
\]
is uniformly bounded for $x\in E_n^{i_0}$,
 
Since $\alpha>\frac{1}{2} \log\lambda$, for any $i\ge1$, the
number of cubes
$\{(\frac{l}{S_k}, \frac{l+1}{S_k}) \times (\frac{m}{S_k},
\frac{m+1}{S_k}) \}$ which $E_n^i$ intersects is uniformly
bounded. 
Hence $|i_1-j_1|\lesssim 1$, where $\frac{i_1}{S_k}$, $\frac{j_1}{S_k}$ are the first
coordinate of $x_{n,i}$ and $x_{n,j}$ respectively.  By
\eqref{eq:liouville}, the shortest distance $d_n$ between $E_n^i$
and $E_n^j$, $i\ne j$ satisfies
\[
  d_n 
  \gtrsim
  \lambda^{- \frac{n}{2}}.
\]



Notice that for $j\ne i_0$, $x \in E_n^{i_0}$ and $y\in E_n^j$,
  there are $0\le m\le d_n^{-1}$ and $1\le l\le d_n^{-1}$ such
  that $|x-y|\ge (m^2+l^2)^{1/2}d_n$. Then
\begin{equation*}
  \begin{split}
    I_2(x)&= \sum_{j\ne i_0} \int_{E_n^j} \frac{1}{|x-y|^s} \,
    \mathrm{d} \mu_n(y) \le \sum_{m=0}^{d_n^{-1}}
    \sum_{l=1}^{d_n^{-1}} ((m^2+l^2)^{1/2}d_n)^{-s} \mu_n(E_n^j)
    \\
    &\lesssim \frac{1}{r_n^2}S_k^s\int_0^{ d_n^{-1}} \int_0^{
      d_n^{-1}} (u^2+v^2)^{- \frac{s}{2}} \, \mathrm{d}u\,
    \mathrm{d}v\\
    &\lesssim r_n^{-2} \lambda^{\frac{1}{2}ns}
    \lambda^{\frac{1}{2}n(2-s)}=e^{n(-2\alpha+ \log\lambda)} < C_1,
  \end{split}
\end{equation*}
here $C_1$ is a constant which is independent of $n$ and $x$. Then
\[
  I_2= \sum_{i=1}^{N_n} \int_{\E_n^i}I_2(x) \, \mathrm{d}
  \mu_n(x) \asymp C_1\lambda^nr_n^{-2}<C_2.
\]
where $C_2$ is a constant which is independent of $n$.

Combining a) and b), we have
\[
  I_s (\mu_n) = I_1 + I_2 \le S_k^{-2} \phi^{-s} (\E_n^i)+C_2.
\]
Put
$s_0 = \min \bigl \{\frac{\log \lambda}{\alpha}, \frac{2 \log
  \lambda}{\alpha + \log \lambda} \bigr\} \leq 2$ and let $m$ be
such that $m < s_0 \le m+1$. Take $s$ such that $m < s < D$.

If $m=0$, then $s_0 = \frac{\log \lambda}{\alpha}$, and we have
\[
  \phi^{s} (E_n^i)= (\lambda_{n,1})^s=
  \frac{(\lambda^n-1)^s}{(a+d-2)^sS_k^{2s}}r_n^s.
\]
For $n$ large
enough, we then have
\begin{align*}
    S_k^{-2} \phi^{-s} (E_n^i) &\asymp
    \Bigl(\frac{(\lambda^n-1)}{\lambda^n+ \lambda^{-n} - 2}
    \Bigr)^{-s} 
    e^{\alpha sn} (\lambda^n + \lambda^{-n}-2)^{-1} \\
    &\lesssim e^{\alpha sn} \lambda^{-n} \asymp \exp \{n (\alpha
    s- \log \lambda) \}.
\end{align*}
From this, it follows since $s < \frac{1}{\alpha} \log\lambda$
that we have $\exp\{n(\alpha s- \log \lambda) \}<M$. This implies
that $S_k^{-2} \phi^{-s} (E_n^i)<M$, where $M>0$ is an absolute
constant.

If $m=1$, then
$s_0 = \frac{2 \log \lambda}{\alpha + \log \lambda}$ and
\begin{align*}
  \phi^{s} (E_n^i)
  &= \lambda_{n,1} (\lambda_{n,2})^{s-1}=
    \frac{\lambda^n-1}{(a+d-2)S_k^{2}}r_n\frac{(1-
    \lambda^{-n})^{s-1}}{(a+d-2)^{s-1}S_k^{2s-2}}r_n^{s-1} \\ 
  &= \frac{(\lambda^n-1) (1-
    \lambda^{-n})^{s-1}}{(a+d-2)^sS_k^{2s}}r_n^{s},
\end{align*}
here $\lambda_{n,2}=r_n\frac{1- \lambda^{-n}}{|H_n|}$.  For $n$
large enough, we have
\begin{align*}
  S_k^{-2} \phi^{-s} (E_n^i)
  &\asymp
    \frac{(\lambda^n+ \lambda^{-n}-2)^{s-1}}{(\lambda^n-1)
    (1- \lambda^{-n})^{s-1}} e^{\alpha sn} \\
  &\lesssim e^{\alpha sn} \lambda^{n(s-2)} = \exp\{n(\alpha
    s- (2-s) \log \lambda) \}.
\end{align*}
Note that for any $s< \frac{2\log\lambda}{\alpha+ \log\lambda}$,
we have $\exp\{n(\alpha s- (2-s) \log \lambda) \}<M$, then it
follows that $S_k^{-2} \phi^{-s} (E_n^i)<M$.

Therefore, for any
\[
  s < s_0 := \min \Bigl\{\frac{2\log\lambda}{\alpha +
    \log\lambda}, \frac{1}{\alpha} \log\lambda \Bigr\} \leq 2,
\]
we have that $I_s(\mu_{2k+1})$ is uniformly bounded for all large
enough $k$. Then from Lemma~\ref{lemma:persson}, we have that
$\limsup_kE_{2k+1} \in \mathcal{G}^{s_0} (\T^2)$.  It follows that
when $\alpha\ge\frac{1}{2} \log\lambda$,
\[
  E\in \mathcal{G}^{s_0} (\T^2).
\]
Moreover, 
\[
  \hdim E\ge \hdim \Bigl(\limsup_{k\to\infty}E_{2k+1} \Bigr) \ge
  \min\Bigl\{\frac{2\log\lambda}{\alpha+ \log\lambda},
  \frac{1}{\alpha} \log\lambda\Bigr\}.
  \qedhere
\]
\end{proof}

\subsection{Estimate when $\boldsymbol{\alpha}$ is small}

In this subsection, we give the lower bound on
$\hdim \limsup_nE_n$ when $0< \alpha< \frac{1}{2} \log\lambda$.  We
denote by $\L^1$ the Lebesgue measure restricted on $\T$.

Let $L_{x_0} := \{\, ({x_0},y) : y\in\mathbb{T} \, \}$,
${x_0}
\in\mathbb{T}$ 
and
\[
  F_n(x_0) = \{\, \E_{n,i} \cap L_{x_0} : 1 \le i\le
  N_n,~\lambda_{n,2}/3\le|E_n^i\cap L_{x_0}|\le 2\lambda_{n,2}
  \, \}.
\]
Assume that $F_n(x_0) = \{\, I_{n,i} : 1\le i\le M_n \, \},$ and
$M_n= \#F_n(x_0)$.
\begin{theorem} \label{main3} For
  $0< \alpha< \frac{1}{2} \log\lambda$, given $x_0\in\mathbb{T}$,
  we have
  \[
    \limsup_{n\to\infty}F_n(x_0) \in\mathcal{G}^{s_1} (L_{x_0}),
  \]
  where $s_1= \frac{\log\lambda- \alpha}{\log\lambda+ \alpha}.$ In
  particular, for any $x_0\in\T$, we have
  \[
    \hdim \Bigl(\limsup_{n\to\infty}F_n(x_0) \Bigr) \ge s_1.
  \]
\end{theorem}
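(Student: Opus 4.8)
The plan is to mimic the strategy of Theorem~\ref{main2}, but one dimension lower, working on the line $L_{x_0}$ and applying Lemma~\ref{lemma:persson} with $m=1$ there. First I would describe the geometry of $F_n(x_0)$: intersecting the family of elliptical discs $\{\E_{n,i}\}$ with the vertical line $L_{x_0}$, the sets $I_{n,i}$ are intervals (chords) whose length is $\asymp \lambda_{n,2} = r_n/(\lambda^n-1) \asymp e^{-(\alpha+\log\lambda)n}$; the condition $\lambda_{n,2}/3 \le |E_n^i \cap L_{x_0}| \le 2\lambda_{n,2}$ selects those discs cut roughly through the middle, and one counts using the regularity of the periodic points (Lemma~\ref{per:contained}, with $S_k \asymp \lambda^k \asymp \lambda^{n/2}$) that the number $M_n$ of such chords is $\asymp \lambda^{n/2} \cdot \lambda^{n/2} / (\text{spacing along } L_{x_0})$; more carefully, the periodic centres that project onto a fixed vertical strip of width $\asymp \lambda_{n,2}$ around $x_0$ number $\asymp S_k \lambda_{n,2}^{\,0}\cdot$(number of rows), and a short computation gives $M_n \asymp \lambda^{n/2} \asymp \lambda^{n/2}$, so that $M_n \cdot \lambda_{n,2} \asymp \lambda^{n/2} e^{-(\alpha+\log\lambda)n}$, which tends to $0$ precisely when $\alpha < \tfrac12\log\lambda$ — exactly the regime of this subsection, confirming the chords do not fill $L_{x_0}$ and leaving room for a limsup fractal of dimension $s_1 = \frac{\log\lambda-\alpha}{\log\lambda+\alpha} < 1$.

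Next I would define, for each $n$ in the relevant odd sequence, the measure $\nu_n = \frac{1}{\L^1(\bigcup_i I_{n,i})}\,\L^1|_{\bigcup_i I_{n,i}}$ on $L_{x_0}$, a normalised one-dimensional Lebesgue measure on the union of chords. The first hypothesis of Lemma~\ref{lemma:persson} — that $C^{-1} \le \liminf \nu_n(B) \le \limsup \nu_n(B) \le C$ for every interval $B \subset L_{x_0}$ — follows from the regular, essentially periodic, distribution of the centres $x_{n,i}$ along any vertical line, exactly as in Theorem~\ref{main2}. The bulk of the work is the Riesz energy bound $I_{s}(\nu_n) = \iint |x-y|^{-s}\,d\nu_n(x)\,d\nu_n(y) < C$ for every $s < s_1$, uniformly in $n$. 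I would split this as $I_1 + I_2$, the diagonal term (both points in the same chord $I_{n,i}$) and the off-diagonal term. For $I_1$, a single interval of length $\asymp\lambda_{n,2}$ carrying mass $\asymp 1/M_n \asymp \lambda^{-n/2}$ contributes $\asymp M_n \cdot (\lambda^{-n/2})^2 \cdot \lambda_{n,2}^{1-s} \cdot \lambda_{n,2}^{-1}\cdot\lambda_{n,2} = \lambda^{-n/2}\lambda_{n,2}^{1-s}$ after normalisation; one checks this stays bounded for $s \le 1 < $ the relevant threshold. For $I_2$, the key input is the separation estimate \eqref{eq:liouville}: along $L_{x_0}$ the chords coming from distinct rows $i_1 \ne j_1$ are separated by $\gtrsim (S_k|i_1-j_1|)^{-1} \gtrsim \lambda^{-n}/|i_1-j_1|$, while same-row chords are at distance $\asymp S_k^{-1} \asymp \lambda^{-n/2}$; summing $\sum_{l} (l \cdot d_n)^{-s}$ over the $\lesssim d_n^{-1}$ chords at each scale with $d_n \asymp \lambda_{n,2}$ and mass $\asymp M_n^{-1}$ per chord, and balancing the number of chords against their spacing, produces a geometric series in $n$ with ratio $e^{n((\log\lambda-\alpha) - s(\log\lambda+\alpha))}$ up to bounded factors, hence summable — in fact bounded — exactly when $s < \frac{\log\lambda-\alpha}{\log\lambda+\alpha} = s_1$.

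With both parts in hand, Lemma~\ref{lemma:persson} (applied on $L_{x_0} \cong \T$) gives $\limsup_n F_n(x_0) \in \mathcal{G}^{s}(L_{x_0})$ for every $s < s_1$, and since $\mathcal{G}^{s}$ classes are nested and $\hdim$ of any member is at least $s$, letting $s \uparrow s_1$ yields $\limsup_n F_n(x_0) \in \mathcal{G}^{s_1}(L_{x_0})$ and $\hdim(\limsup_n F_n(x_0)) \ge s_1$. The main obstacle I anticipate is the off-diagonal estimate $I_2$: one must correctly account for the two different separation scales in \eqref{eq:liouville} (same row versus different rows) and verify that the Liouville bound, which loses a factor $|i_1-j_1|$, is still good enough after summation — i.e. that $\sum_{i_1-j_1} |i_1-j_1|^{s-1}$ type sums do not destroy the bound; since the number of relevant rows at each dyadic scale along $L_{x_0}$ is itself controlled by $M_n$, this should work for $s<1$, but it requires care to set up the double sum so that the row index and the vertical index are disentangled. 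A secondary subtlety is checking uniformity in $x_0$ of all the implied constants, which follows because the separation constant $c_1$ in \eqref{eq:liouville} depends only on $A$ and the counting of centres in a vertical strip is translation-uniform.
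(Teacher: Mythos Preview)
Your overall strategy matches the paper's exactly: put normalised Lebesgue measure on the chords, verify the ball condition of Lemma~\ref{lemma:persson}, and split the $s$-energy into diagonal and off-diagonal parts. However, your count of $M_n$ is wrong, and the error propagates through every subsequent estimate. An ellipse $\E_n^i$ meets $L_{x_0}$ in a chord of length $\asymp \lambda_{n,2}$ precisely when its centre lies in a strip about $L_{x_0}$ of width $\asymp \lambda_{n,1}\asymp r_n$ (the \emph{long} semi-axis, not $\lambda_{n,2}$). Since the centres sit on a grid of mesh $S_k^{-1}\asymp\lambda^{-n/2}$, the number of centres in a strip of width $r_n$ and height $1$ is $M_n\asymp r_n S_k\cdot S_k\asymp r_n\lambda^n$, not $\lambda^{n/2}$; this is Lemma~\ref{num:ellipsoid}. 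Hence $\L^1(F_n(x_0))\asymp M_n\lambda_{n,2}\asymp r_n^2$ and each chord carries mass $\asymp r_n^{-1}\lambda^{-n}$, not $\lambda^{-n/2}$.

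With the correct normalisation, the threshold $s_1$ is determined by the \emph{diagonal} term, not the off-diagonal one: for $x\in I_{n,i}$ one has $\int_{I_{n,i}}|x-y|^{-s}\,\mathrm{d}\nu_n(y)\lesssim r_n^{-2}\lambda_{n,2}^{\,1-s}\asymp e^{n((1+s)\alpha-(1-s)\log\lambda)}$, bounded iff $s<s_1$. Your exponent $(\log\lambda-\alpha)-s(\log\lambda+\alpha)$ is the negative of this, so your claim ``bounded exactly when $s<s_1$'' has the inequality reversed. For the off-diagonal term the paper splits according to distance from $x$: intervals at distance $\ge p/S_k$ for $p\ge 1$ (this part is bounded for all $s\le 1$), and intervals within a single $S_k^{-1}$-annulus, where the Liouville separation gives $d_n\gtrsim\lambda^{-n}$ (not $\lambda_{n,2}$ as you wrote). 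The latter yields the \emph{weaker} constraint $s<\frac{\log\lambda}{\log\lambda+2\alpha}$, which exceeds $s_1$ and is therefore not binding. So your attribution of $s_1$ to $I_2$, and your worry about the factor $|i_1-j_1|$ in the Liouville bound spoiling the sum, are both misplaced: the near-diagonal part of $I_2$ only needs to beat a softer threshold.
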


Observe that for any $x\in\mathbb{T}$, the set $F_n(x)$ may be
regarded as a subset of the intersection of $E_n$ with the line
$L_x$ on $\T^2$. Thus, applying Proposition 7.9 in
\cite{falconer}, we deduce that
\[
  \hdim \Bigl(\limsup_{n\to\infty}E_n\Bigr) \ge s_1+1=s_0.
\]
Then we get a lower bound on $\hdim E$, which coincides with
its upper bound. However, we will show more about
$\limsup_kE_{2k+1}$, namely the following.

\begin{theorem} \label{main4}
  For $0< \alpha< \frac{1}{2} \log\lambda$, we have
  \[
    \limsup_{n\to\infty}E_n\in\mathcal{G}^{s_0} (\T^2),
  \]
  where $s_0= \frac{2\log\lambda}{\log\lambda+ \alpha}.$
\end{theorem}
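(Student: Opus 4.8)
The plan is to produce, for each odd $n=2k+1$, a measure $\mu_n$ supported on $E_n \subset \T^2$ whose Riesz $s$-energy is uniformly bounded for every $s < s_0 = \frac{2\log\lambda}{\log\lambda + \alpha}$, and then invoke Lemma~\ref{lemma:persson}. The natural candidate is again the normalised Lebesgue measure on $E_n$, that is $\mu_n = \L(E_n)^{-1} \L|_{E_n}$, so that the ball condition of Lemma~\ref{lemma:persson} holds with $C=1$ by the regular distribution of the periodic points $\{x_{n,i}\}$. The new difficulty compared with Theorem~\ref{main2} is that when $\alpha$ is small the parallelograms $E_n^i$ are long and thin (the ratio $\lambda_{n,1}/\lambda_{n,2} \asymp \lambda^n$ is large), and many of them line up close together along the contracting direction, so the crude ``grid of squares of side $d_n$'' bound used before is too lossy. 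I would instead decompose the energy as $I_s(\mu_n) = I_1 + I_2$ with $I_1$ the diagonal (same-parallelogram) term and $I_2$ the off-diagonal term, exactly as in the proof of Theorem~\ref{main2}.

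For the diagonal term $I_1$, the estimate is identical to case a) of Theorem~\ref{main2}: using the singular value function $\phi^s$ of the ellipse and $\L(E_n^i) \asymp r_n^2 S_k^{-2}$, one gets $I_1 \lesssim S_k^{-2}\phi^{-s}(E_n^i)$, and since now $s < s_0 = \frac{2\log\lambda}{\log\lambda+\alpha} < 1$ necessarily (because $\alpha < \tfrac12\log\lambda$ forces $s_0<2$, and in fact in this regime one checks $s_0 \le 1$ is \emph{not} what happens — rather $s_0 \in (1,2)$), so I take $m=1$ and use $\phi^s(E_n^i) = \lambda_{n,1}\lambda_{n,2}^{s-1} \asymp r_n^s \lambda^{n(2-s)} S_k^{-2s}$ up to constants; then $S_k^{-2}\phi^{-s}(E_n^i) \asymp e^{\alpha s n}\lambda^{-n(2-s)}\cdot(\text{powers of }S_k\text{ that cancel}) = \exp\{n(\alpha s - (2-s)\log\lambda)\}$, which is bounded precisely when $s < \frac{2\log\lambda}{\alpha+\log\lambda} = s_0$. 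So $I_1$ is harmless.

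The main obstacle is the off-diagonal term $I_2 = \sum_i \int_{E_n^i} R(x)\,d\mu_n(x)$ where $R(x) = \sum_{j\ne i}\int_{E_n^j}|x-y|^{-s}\,d\mu_n(y)$, and here I cannot assume $|i_1 - j_1| \lesssim 1$ as before — neighbouring strips $\{x_0 = m/S_k\}$ now each carry $\asymp \lambda^n$ parallelograms and the contracting eigendirection is irrational of degree $2$, so the relevant separations are governed by Liouville's inequality \eqref{eq:liouville}: for $i_1 \ne j_1$ the distance between $E_n^i$ and $E_n^j$ is $\gtrsim \frac{1}{S_k|i_1-j_1|}$. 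The plan is to organise the sum over $j\ne i$ first by the column difference $p = |i_1 - j_1| \in \{0,1,\ldots,S_k-1\}$ and then, within a fixed pair of columns, by the vertical index; for fixed $p\ge1$ the parallelograms in column $j_1$ sit on a line of irrational slope, and consecutive ones seen from $x\in E_n^{i_0}$ are at vertical heights spaced $\asymp S_k^{-1}$ apart while the horizontal offset is $\gtrsim (pS_k)^{-1}$, so $|x-y| \gtrsim \max\{(pS_k)^{-1}, q S_k^{-1}\}$ for the $q$-th one. Summing $\sum_{q\ge 0}(\max\{(pS_k)^{-1}, qS_k^{-1}\})^{-s}\mu_n(E_n^j) \lesssim r_n^{-2} S_k^{s-2}\cdot p^{s-1}$ (an integral comparison, valid since $s<1$ makes the tail in $q$ convergent, or more carefully since $s>1$ one must be slightly more careful — one keeps the $q=0$ term separate), and then $\sum_{p=1}^{S_k} p^{s-1} \asymp S_k^{s}$ when $s>0$; combining with the $p=0$ contribution (which is the ``same column'' case, distance $\asymp S_k^{-1}$, giving $\asymp r_n^{-2}S_k^{s-2}\cdot S_k^{s}$ after summing the $\asymp \lambda^n \asymp S_k^{2}$... ) one arrives at $R(x) \lesssim r_n^{-2}S_k^{2s-2} \cdot (\text{bounded})$. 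Finally $I_2 = \sum_i \mu_n(E_n^i)\sup_x R(x) \lesssim r_n^{-2}S_k^{2s-2}$, and since $S_k \asymp \lambda^k$ and $r_n = e^{-\alpha n}$ with $n = 2k+1$, this is $\asymp \exp\{n(2\alpha - (2-2s)\cdot\tfrac12\log\lambda)\}\cdot(\ldots)$, bounded exactly when $s < \frac{2\log\lambda}{2\alpha + \log\lambda}$; I would check that this threshold is $\ge s_0$ so that the binding constraint remains $I_1$. (The bookkeeping of powers of $S_k$ versus $\lambda^n$ and $r_n$ is the one genuinely delicate computation, and getting the Liouville exponent to interact correctly with the sum over columns $p$ is where the argument can fail if done carelessly.) Once $I_s(\mu_n) \le \text{const}$ for all large $k$ and all $s<s_0$, Lemma~\ref{lemma:persson} gives $\limsup_{n}E_n = \limsup_k E_{2k+1} \in \mathcal{G}^{s_0}(\T^2)$, completing the proof.
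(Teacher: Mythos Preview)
Your proposal takes a genuinely different route from the paper, and as written it has a real gap in the off-diagonal estimate.

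The paper does \emph{not} bound $I_s(\mu_n)$ directly on $\T^2$. Instead it slices: for each vertical line $L_{x_0}$ it considers the one-dimensional measure $\mu_{x_0,n}$ on $F_n(x_0)\subset E_n\cap L_{x_0}$, proves (Theorem~\ref{main3}) that the Riesz potentials $R_{s_1}\mu_{x_0,n}$ are uniformly bounded for every $s_1<\frac{\log\lambda-\alpha}{\log\lambda+\alpha}$, and then integrates these fibre measures against Lebesgue measure on the base to form $\tilde\mu_n$. Lemma~\ref{lemma:main} converts the bound $R_{s_1}\mu_{x_0,n}\le C$ on each fibre together with the trivial bound $R_{1-\varepsilon}\L^1\le C$ on the base into $I_{s_1+1-\varepsilon}(\tilde\mu_n)\le C$. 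Since $s_1+1=s_0$, Lemma~\ref{lemma:persson} then gives $\limsup_k E_{2k+1}\in\mathcal G^{s_0}(\T^2)$. The point of slicing is that on each line the relevant exponent is $s_1<1$, so the one-dimensional energy integrals are tame; the difficult geometry of long thin parallelograms lined up along an irrational direction is absorbed into the almost orthogonal decomposition ``fibre $\times$ base''.

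Your direct approach founders exactly where you flag it. In the regime $0<\alpha<\tfrac12\log\lambda$ one has $s_0=\frac{2\log\lambda}{\alpha+\log\lambda}\in(\tfrac43,2)$, so you must take $s>1$. With $s>1$ the kernel $|x-y|^{-s}$ is not integrable along lines, and replacing $\int_{E_n^j}|x-y|^{-s}\,d\mu_n(y)$ by $\mu_n(E_n^j)\cdot d(E_n^i,E_n^j)^{-s}$, as your column-by-column sum implicitly does, loses a factor comparable to the aspect ratio of the parallelograms. Concretely, your own bookkeeping gives $I_2\lesssim r_n^{-2}S_k^{2s-2}\asymp\exp\{n(2\alpha+(s-1)\log\lambda)\}$, which is bounded only for $s<1-\frac{2\alpha}{\log\lambda}$ (not $\frac{2\log\lambda}{2\alpha+\log\lambda}$ as you wrote, and in any case both thresholds are strictly below $1<s_0$). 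So the check ``this threshold is $\ge s_0$'' fails, and the argument as it stands yields nothing beyond what Theorem~\ref{main2} already gives at $\alpha=\tfrac12\log\lambda$. A correct direct estimate of $I_2$ would require integrating $|x-y|^{-s}$ over pairs of nearly parallel segments of length $\asymp r_n$ at transverse separation $\asymp (S_k p)^{-1}$, which for $1<s<2$ contributes $\asymp r_n\,(S_kp)^{s-1}$ rather than a constant; carrying this through and summing over $p$ and over the $S_k$ centres per column is possible but considerably more delicate than what you sketched, and is precisely what the paper's slicing argument is designed to avoid.
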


Since it suffices to study $\limsup_kF_{2k+1} (x_0)$ and
$\limsup_kE_{2k+1}$ instead of $\limsup_nF_{n} (x_0)$ and
$\limsup_nE_{n}$, from now on, we always assume that $n=2k+1$,
$k\ge0$.

\begin{lem} \label{num:ellipsoid}
  Given any segment with $L_l \subset L_{x_0}$, of length $\ell$,
  for large enough $n = 2k + 1$, we have
  \[
    \#\Bigl\{\, i : 1\le i\le
    N_n,~\frac{\lambda_{n,2}}{4} \le|E_n^i\cap L_l|\le
    2\lambda_{n,2} \,\Bigr\} \asymp \ell r_n\lambda^n.
  \]
  In particular, $\#M_n\asymp r_n\lambda^n$.
\end{lem}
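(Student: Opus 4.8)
The plan is to reduce the statement to a lattice--point count on the grid of periodic points; the hypothesis $0<\alpha<\tfrac12\log\lambda$ is precisely what makes the relevant lattice counts agree, up to constants, with their ``volumes''.

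First I would analyse a single vertical section. Recall from Lemma~\ref{per:contained} and Notation~\ref{ellipsoid} that for $n=2k+1$ the periodic points form the grid $\{(\tfrac{m}{S_k},\tfrac{j}{S_k}):0\le m,j\le S_k-1\}$, that $N_n=S_k^2\asymp\lambda^n$, and that $E_n^i$ is the parallelogram centred at $x_{n,i}$ with half--diagonals $[1,\tfrac{\lambda^{-1}-a}{b}]^t\lambda_{n,1}$ and $[1,\tfrac{\lambda-a}{b}]^t\lambda_{n,2}$, where $\lambda_{n,1}=\tfrac{r_n}{1-\lambda^{-n}}\asymp r_n$ and $\lambda_{n,2}=\tfrac{r_n}{\lambda^n-1}\asymp r_n\lambda^{-n}$. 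A vertical translation of $E_n^i$ leaves $E_n^i\cap L_{x_0}$ unchanged, so $|E_n^i\cap L_{x_0}|$ depends only on $\delta:=d(x_0,\tfrac{m}{S_k})$, where $\tfrac{m}{S_k}$ is the first coordinate of $x_{n,i}$. Intersecting the line $\{x=x_0\}$ with the two sides of the parallelogram that it meets, one finds that $|E_n^i\cap L_{x_0}|$ is a ``tent''-type function of $\delta$: it is $\asymp(1-\delta/\lambda_{n,1})\lambda_{n,2}$ for $\delta\le\lambda_{n,1}$ (with implied constants depending only on $A$) and $0$ for $\delta>\lambda_{n,1}$, and the section, when non-empty, is a segment centred at height $\tfrac{j}{S_k}+(x_0-\tfrac{m}{S_k})\tfrac{\lambda^{-1}-a}{b}\pmod1$. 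Hence the two-sided bound $\tfrac{\lambda_{n,2}}{4}\le|E_n^i\cap L_l|\le 2\lambda_{n,2}$ holds exactly when $\tfrac{m}{S_k}$ lies within distance $\asymp\lambda_{n,1}\asymp r_n$ of $x_0$ --- more precisely, in a union of at most two intervals about $x_0$ of total length $\asymp\lambda_{n,1}$, bounded away from $x_0\pm\lambda_{n,1}$; call such $m$ \emph{admissible} --- and, in addition, the section (a segment of length $\lesssim\lambda_{n,2}$) lies inside the arc $L_l$.

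Then I would count. The number of admissible columns is $\asymp\lambda_{n,1}S_k\asymp r_nS_k$, uniformly in $x_0$; the $O(1)$ rounding error here is negligible because $r_nS_k\asymp(\lambda^{1/2}e^{-\alpha})^n\to\infty$, which is exactly where $\alpha<\tfrac12\log\lambda$ enters. Fix an admissible column $m$: as $j$ runs over $0,\dots,S_k-1$ the section is translated through the $S_k$ equally spaced points of step $\tfrac1{S_k}$ on $\T$, and since $\lambda_{n,2}\ll\ell$ and $\ell S_k\to\infty$, exactly $\ell S_k+O(1)\asymp\ell S_k$ of them land inside $L_l$. Multiplying over admissible columns, and using $S_k^2=N_n\asymp\lambda^n$ together with $r_nS_k,\ell S_k\to\infty$ (so that the $O(1)$ corrections are of lower order), the number of admissible $i$ is $\asymp r_nS_k\cdot\ell S_k=\ell r_nS_k^2\asymp\ell r_n\lambda^n$. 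For $\#M_n$ one takes $L_l=L_{x_0}$, so that $\ell=1$ and every row of an admissible column is automatically counted, giving $\#M_n\asymp r_n\lambda^n$.

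I expect the main obstacle to be the first step --- computing $|E_n^i\cap L_{x_0}|$ sharply enough to pin down the admissible columns and, in particular, to check that the specific constants $\tfrac14$ and $2$ in the statement (and the analogous $\tfrac13$, $2$ in the definition of $F_n$) are realised on a column set of length $\asymp r_n$. This is the only place where the precise shape of $E_n^i$ is used; everything afterwards is bookkeeping, with over-counting ruled out because the $E_n^i$ are pairwise disjoint for large $n$ (already established), and the passage from lattice counts to lengths controlled entirely by $r_nS_k\to\infty$ and $\ell S_k\to\infty$.
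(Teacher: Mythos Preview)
Your proposal is correct and follows essentially the same approach as the paper: both reduce the count to lattice points of the grid $\{(m/S_k,j/S_k)\}$ lying in a slanted region of horizontal width $\asymp\lambda_{n,1}\asymp r_n$ and vertical extent $\asymp\ell$, and both use $r_nS_k\to\infty$ and $\ell S_k\to\infty$ to absorb the $O(1)$ rounding errors. The only cosmetic difference is that the paper packages the region as a single parallelogram $P_l$ and bounds the lattice count by the area of $S_kP_l$ (nominally citing Pick's theorem, though only rectangle-based bounds are actually used), whereas you slice into admissible columns first and then count rows per column; the arithmetic is identical.
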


\begin{proof}
  Assume that $L_l= \{\, (x_0,y) : y\in(y_0,y_0+ \ell) \,
  \}$. Note that $i\ge1$ with
  $\frac{\lambda_{n,2}}{4} \le|E_n^i\cap L_l|\le 2\lambda_{n,2}$
  if and only if $x_{n,i} \in P_l$, where $x_{n,i}$ is the centre
  of $\E_n^i$, and $P_l$ is the parallelogram with vertices
  $(a',b')$, $(d',c')$, $(a',b'+ \ell)$,
  $(d',c'+ \ell)$, and
  \begin{align*}
      a'&=x_0-K_0\lambda_{n,1}, \qquad
        d'=x_0+K_0\lambda_{n,1}, \\
    b'&=y_0-K_0' \lambda_{n,1}, \qquad
     c'=y_0+K_0'\lambda_{n,1}
  \end{align*}
  here $K_0,~K_0'>0$ are absolute constants.
  Notice that
  \[
    \#\{\, x_i : 1\le i\le N_n,~x_i\in P_l \, \}= \#\{\,
    (p,q) \in\mathbb{Z}^2 : (p,q) \in S_kP_l \,\}.
  \]
  The coordinates of the vertices of $S_kP_l$ are
  $(a'S_k,c'S_k)$, $(d'S_k,b'S_k)$, $(a'S_k,c'S_k+ \ell S_k)$, and
  $(d'S_k,b'S_k+ \ell S_k)$.
  By Pick's theorem, we have 
  \begin{multline}
    \label{unum:ellipsoid}
    \#\{\, (p,q) \in\mathbb{Z}^2 : (p,q) \in S_kP_l \,\} \le (\lceil
    d'S_k\rceil- \lfloor a'S_k\rfloor) \lceil lS_k\rceil\\
    \le (lS_k+1) (2K_0r_nS_k\frac{\lambda^{n}-1}{|H_n|}+2) \lesssim
    \ell r_n\lambda^n,
  \end{multline}
  and 
  \begin{multline}
    \label{lnum:ellipsoid}
    \#\{\,(p,q) \in\mathbb{Z}^2 : (p,q) \in S_kP_l \,\} \ge
    (\lfloor d'S_k\rfloor- \lceil a'S_k\rceil) \lfloor
    lS_k\rfloor\\
    \ge (lS_k-1) (2K_0r_nS_k\frac{\lambda^{n}-1}{|H_n|}-2)
    \gtrsim \ell r_n\lambda^n.
  \end{multline}
  Combining \eqref{unum:ellipsoid} and \eqref{lnum:ellipsoid}, we
  have
  \[
    \#\Bigl\{\, i : 1\le i\le N_n,~\frac{\lambda_{n,2}}{4}
    \le|E_n^i\cap L_l|\le 2\lambda_{n,2} \, \Bigr\} \asymp \ell
    r_n\lambda^n.
  \]
  Take $\ell=1$. Then $M_n\asymp  r_n\lambda^n.$
\end{proof}

Now we give the proof of Theorem~\ref{main3}.

\begin{proof}[Proof of Theorem~\ref{main3}]
  Given $x_0\in\T$, in this proof, we write $F_n(x_0)$ as $F_n$
  for convenience. For $n=2k+1$, let
  \[
    \mu_n= \frac{1}{\L^1(F_n)} \L^1|_{F_n}.
  \]
  It follows from Lemma~\ref{num:ellipsoid} that
  $\L^1(F_n) \asymp r_n^{2}$.

  i) \, For any ball $B\subset L_{x_0}$, we have
  \begin{align*}
    \mu_n(B) &= r_n^{-2} \sum_{\substack{i=1 \\ I_{n,i} \cap B \ne
    \varnothing}}^{M_n} \L^1(B\cap I_{n,i}) \\
             &\lesssim r_n^{-2} \L^1 \Bigl((1+
               \frac{2\lambda_{n,2}}{r_B}) B 
               \Bigr) r_n \lambda^n r_n \lambda^{-n} \le
               \L^1\Bigl((1+ \frac{2\lambda_{n,2}}{r_B})B\Bigr).
  \end{align*}
  Since $\max_i|I_{n,i}|\to 0$, as $n\to\infty$, then
  \[
    \limsup_{n\to\infty} \frac{\mu_n(B)}{\L^1(B)} \lesssim 1.
  \]

  And for $n$ large enough, we have 
  \begin{equation*}
    \begin{split}
      \mu_n(B)&= \sum_{\substack{i=1 \\ I_{n,i} \cap B \ne
          \varnothing}}^{M_n} \frac{\L^1 (B \cap
        I_{n,i})}{\L^1(F_n)} \ge \sum_{\substack{i=1 \\ I_{n,i}
          \subset
          B}}^{M_n} \frac{\L^1(I_{n,i})}{\L^1(F_n)} \\
      &\asymp r_n^{-2} \#\{\, i : I_{n,i} \subset
      B \,\}r_n\lambda^{-n}.
    \end{split}
  \end{equation*}
  Since 
  \[
    \#\{\, i : I_{n,i} \subset B \,\} \supset \{\, i : \E_n^i\cap
    (1-2\lambda_{n,2})B\ne\varnothing \,\},
  \]
  by Lemma~\ref{num:ellipsoid}, we have 
  \[
    \#\{\, i : I_{n,i} \subset B \,\} \gtrsim
    \L^1((1-2\lambda_{n,2})B)r_n\lambda^n .
  \]
  Then 
  \[
    \mu_n(B) \gtrsim \L^1((1-2\lambda_{n,2})B),
  \]
  and it follows that for large enough $n$,
  \[
    \liminf_{n\to\infty} \frac{\mu_n(B)}{\L^1(B)} \gtrsim1.
  \]
  Therefore 
  \[
    C_3^{-1} \le \liminf_{n\to\infty} \frac{\mu_n(B)}{\L^1(B)} \le
    \limsup_{n\to\infty} \frac{\mu_n(B)}{\L^1(B)} \le C_3,
  \]
  where $C_3>0$ is a constant independent of $n$ and $B$. 

  2) \, Now we show that the energy of $\mu_n$ is bounded for some
  $s$. We have
  \begin{equation*}
    \begin{split}
      I_s(\mu_n)&= \iint|x-y|^{-s} \, \mathrm{d} \mu_n(x) \,
      \mathrm{d} \mu_n(y) \\
      &= \sum_{i=1}^{M_n} \sum_{j=1}^{M_n} \int_{I_{n,i}}
      \int_{I_{n,j}} |x-y|^{-s} \, \mathrm{d} \mu_n(x) \,
      \mathrm{d} \mu_n(y) \\
      &= \sum_{i=1}^{M_n} \int_{I_{n,i}} \int_{I_{n,i}}
      |x-y|^{-s} \, \mathrm{d} \mu_n(x) \, \mathrm{d} \mu_n(y) \\
      & \qquad + \sum_{i=1}^{M_n} \sum_{\substack{j=1 \\ j\ne i}}^{M_n}
      \int_{I_{n,i}} \int_{I_{n,j}} |x-y|^{-s} \,
      \mathrm{d} \mu_n(x) \, \mathrm{d} \mu_n(y).
    \end{split}
  \end{equation*}
  1) \, When $i=j$, for $x\in I_{n,i}$ for some $i\ge1$, we have
  \[
    I_1(x) := \int_{I_{n,i}}|x-y|^{-s} \, \mathrm{d} \mu_n(y) \le
    \frac{1}{r_n^2}|I_{n,i}|^{1-s} \lesssim
    e^{n\{(1+s) \alpha- (1-s) \log\lambda\}}.
  \]
  Notice that for any
  $s< \frac{\log\lambda- \alpha}{\log\lambda+ \alpha}$, we have
  $I_1(x) \lesssim 1$, which implies that
  \begin{equation}
    \label{part1}
    I_1 := \sum_{i=1}^{M_n} \int_{I_{n,i}} \int_{I_{n,i}}
    |x-y|^{-s} \, \mathrm{d} \mu_n(x) \, \mathrm{d} \mu_n(y) \lesssim
    r_n \lambda^nr_n^{-1} \lambda^{-n}<C_4
  \end{equation}
  holds for $n$ large enough.

  2) \, For $x\in I_{n,i}$ for some $i\ge1$, we let
  \begin{equation*}
      I_2(x) := \sum_{\substack{j=1 \\ j\ne i}}^{M_n}
      \int_{I_{n,j}}|x-y|^{-s} \, \mathrm{d} \mu_n(y).\\
  \end{equation*}
  For $p\ge0$, let  
  \[
    A_p= \Bigl\{\, j : I_{n,j} \cap \Bigl\{\, (x_0,y) :
    \frac{p+1}{S_k}>|y-x|\ge \frac{p}{S_k} \,\Bigr\}
    \ne\varnothing \,\Bigr\}.
  \]
  Then
  \begin{align*}
    I_2(x) &\phantom{:}\leq S_1(x)+S_2(x) \\
           &:= \sum_{p=1}^{S_k-1} \sum_{j\in A_p}
             \int_{I_{n,j}} |x-y|^{-s} \, \mathrm{d} \mu_n(y) +
             \sum_{\substack{ j\in A_0 \\ j\ne 1}}
    \int_{I_{n,j}}|x-y|^{-s} \,
    \mathrm{d} \mu_n(y).
  \end{align*}
  For $p\ge1$, $j\in A_p$, and $y\in I_{n,j}$, we have
  $|x-y|\ge \frac{p}{S_k}$. Then it follows from
  Lemma~\ref{num:ellipsoid} that for any $s\le 1$,
  \begin{equation}
    \label{s1}
    \begin{split}
      S_1(x)&= \sum_{p=1}^{S_k-1} \sum_{j\in A_p} \mu_n(I_{n,j})
      \Bigl (\frac{p}{S_k} \Bigr)^{-s} \\
      &\asymp \sum_{p=1}^{S_k-1} \#A_p p^{-s} r_n^{-1}
      \lambda^{-n} S_k^s\\
      & \asymp \lambda^{\frac{n}{2} (s-1)}
      \sum_{p=1}^{S_k-1}p^{-s} \le \lambda^{\frac{n}{2} (s-1)}
      \int_0^{S_k} u^{-s} \, \mathrm{d}u\\
      &\le \lambda^{\frac{n}{2} (s-1)} \lambda^{\frac{n}{2}
        (1-s)} < C_5.
    \end{split}
  \end{equation}
  Now we estimate
  $S_2(x) = \sum_{\substack{j\in A_0 \\ j\ne 1}}
  \int_{I_{n,j}}|x-y|^{-s} \, \mathrm{d}\mu_n(y)$.
 Recall \eqref{eq:liouville}, then for $m\ne j$,
 \[
    d(E_n^m, E_n^j) 
    \gtrsim  \lambda^{-n}.
  \]
  We denote the shortest distance between intervals $\{I_{n,i} \}$
  by $d_n$, then for any $j_1\ne j_2\in A_0$, we have
  $d(I_{n,j_1},I_{n,j_2}) \ge d_n\gtrsim \lambda^{-n}$. Therefore
  \begin{equation}
    \label{s2}
    \begin{split}
      S_2(x)&\le \sum_{q=1}^{\#A_0} (qd_n)^{-s} \mu_n(I_{n,j})
      \lesssim r_n^{-2} r_n \lambda^{-n} \lambda^{ns}
      \sum_{q=1}^{\#A_0} q^{-s} \\
      &\le r_n^{-1} \lambda^{n(s-1)} \int_{0}^{\#A_0} u^{-s} \,
      \mathrm{d}u \le r_n^{-1} \lambda^{n(s-1)} (r_nS_k)^{1-s} \\
      &\lesssim r_n^{-s} \lambda^{\frac{n}{2} (s-1)} =
      e^{n\{\alpha s + \frac{1}{2} \log\lambda(s-1) \}}.
    \end{split}
  \end{equation}
  Notice that for any
  $s< \frac{\log\lambda}{\log\lambda+2\alpha}$, we have
  $S_2(x) \lesssim 1$.  From \eqref{s1} and \eqref{s2}, we have
  for any $s< \frac{\log\lambda}{\log\lambda+2\alpha}$, we have
  \begin{equation}
    \label{part2}
    I_2 := \sum_{i=1}^{M_n} \sum_{\substack{j=1 \\ j\ne i}}^{M_n}
    \int_{I_{n,i}} \int_{I_{n,j}}|x-y|^{-s} \,
    \mathrm{d} \mu_n(x) \, \mathrm{d} \mu_n(y)<C_6.
  \end{equation}
  Notice that 
  \[
    \min \Bigl\{\frac{\log\lambda - \alpha}{\log\lambda +
      \alpha}, \frac{\log \lambda}{\log \lambda + 2\alpha}
    \Bigr\} = \frac{\log\lambda- \alpha}{\log\lambda+ \alpha}
  \]
  holds for all $\alpha>0$.  Combining \eqref{part1} and
  \eqref{part2}, for any
  $s< \frac{\log\lambda- \alpha}{\log\lambda+ \alpha}$, we have
  $I_s(\mu_n)<C_4+C_6$ for $n$ large enough.

  Write $s_1= \frac{\log\lambda- \alpha}{\log\lambda+ \alpha}$.
  Applying Lemma~\ref{lemma:persson}, we have
  $\limsup_{n\to\infty}F_{n} \in\mathcal{G}^{s_1} (L_{x_0})$.
\end{proof}

Suppose that $\mu$ is a finite measure on $[0,1]^2$ which can be
disintegrated as
\[
  \mu (A) = \int \mu_x (A \cap \{x\} \times [0,1]) \, \mathrm{d}
  \nu (x) = \int \mu_x (A) \, \mathrm{d} \nu (x),
\]
for any $A\in \mathcal{B}$, where $\mu_x$ is a measure with
support in $\{x\} \times [0,1]$. We say $\{\mu_x\}_{x\in[0,1]}$
is a disintegration of $\mu$ over $\nu$. We assume moreover that
$\mu_x$ and $\nu$ are probability measures.


\begin{lemma} \label{lemma:main}
  With $\mu$ as above, we have for $x = (x_1, x_2)$ that
  \[
    R_{s+t} \mu (x) \leq \int R_s \mu_{y_1} (x_2) |x_1 -
    y_1|^{-t} \, \mathrm{d} \nu (y_1).
  \]
  In particular, if the $s$-dimensional Riesz potentials of
  $\mu_x$ are uniformly bounded and if $\nu$ has a bounded
  $t$-dimensional potential, then
  \[
    I_{s+t} (\mu) \leq \sup_x \lVert R_s \mu_x \rVert_\infty
    \lVert R_t \nu \rVert_\infty.
  \]
\end{lemma}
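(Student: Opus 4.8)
The plan is to unfold the definition of $R_{s+t}\mu(x)$, insert the disintegration of $\mu$ over $\nu$, and then dominate the two-dimensional kernel $|x-y|^{-(s+t)}$ by a product of a one-dimensional kernel in the second coordinate with one in the first coordinate. Concretely, I would first write, for $x=(x_1,x_2)$ and $y=(y_1,y_2)$,
\[
  R_{s+t}\mu(x)=\int_{[0,1]^2}|x-y|^{-(s+t)}\,\mathrm{d}\mu(y)
  =\int\Bigl(\int|x-y|^{-(s+t)}\,\mathrm{d}\mu_{y_1}(y_2)\Bigr)\mathrm{d}\nu(y_1),
\]
which is legitimate by Tonelli's theorem, since the kernel is non-negative and $y_1\mapsto\mu_{y_1}$ is measurable by the definition of a disintegration. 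Here, and throughout, $\mu_{y_1}$ is identified with a probability measure on $[0,1]$ via its second coordinate, so that $\int|x-y|^{-s}\,\mathrm{d}\mu_{y_1}(y_2)=R_s\mu_{y_1}(x_2)$.

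The one line of content is the elementary estimate $|x-y|^2=|x_1-y_1|^2+|x_2-y_2|^2$ (with the circle metric used in each coordinate if we work on $\T^2$), which gives $|x-y|\ge|x_1-y_1|$ and $|x-y|\ge|x_2-y_2|$, hence for $s,t\ge0$
\[
  |x-y|^{-(s+t)}=|x-y|^{-s}\,|x-y|^{-t}\le|x_2-y_2|^{-s}\,|x_1-y_1|^{-t}.
\]
Substituting this into the iterated integral and recognising the inner integral as $R_s\mu_{y_1}(x_2)$ yields the first assertion
\[
  R_{s+t}\mu(x)\le\int R_s\mu_{y_1}(x_2)\,|x_1-y_1|^{-t}\,\mathrm{d}\nu(y_1).
\]
The only real choice in the argument is which of $s,t$ to attach to which coordinate; it must be $s$ on the second coordinate (to match $R_s\mu_{y_1}$) and $t$ on the first (to match $R_t\nu$), and the two lower bounds above are exactly what makes this split available.

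For the \emph{in particular} statement, assume $\lVert R_s\mu_z\rVert_\infty\le M$ for every $z$. Then $R_s\mu_{y_1}(x_2)\le M$ uniformly in $y_1$ and $x_2$, and factoring this constant out of the last integral gives $R_{s+t}\mu(x)\le M\int|x_1-y_1|^{-t}\,\mathrm{d}\nu(y_1)=M\,R_t\nu(x_1)\le M\,\lVert R_t\nu\rVert_\infty$. Finally, integrating $R_{s+t}\mu$ against $\mu$ and using that $\mu$ is a probability measure (since $\mu([0,1]^2)=\int\mu_x([0,1]^2)\,\mathrm{d}\nu(x)=\nu([0,1])=1$) yields
\[
  I_{s+t}(\mu)=\int R_{s+t}\mu\,\mathrm{d}\mu\le\sup_x\lVert R_s\mu_x\rVert_\infty\,\lVert R_t\nu\rVert_\infty.
\]

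There is no serious obstacle here: the proof is essentially the pointwise kernel inequality followed by two applications of Tonelli's theorem. The only points requiring a little care are the measurability needed to split the integral along the disintegration (which is built into the definition of disintegration, together with non-negativity of the kernel), and keeping the metric used to define $R_s\mu_x$ and $R_t\nu$ consistent with the product bound on $|x-y|^{-(s+t)}$.
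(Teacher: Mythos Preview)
Your proof is correct and follows essentially the same route as the paper's own argument: disintegrate, apply the pointwise kernel bound $|x-y|^{-(s+t)}\le|x_2-y_2|^{-s}|x_1-y_1|^{-t}$, recognise the inner integral as $R_s\mu_{y_1}(x_2)$, then pass to the sup-norm and use that $\mu$ is a probability measure to bound $I_{s+t}(\mu)$ by $\lVert R_{s+t}\mu\rVert_\infty$. The only difference is that you spell out the justification via Tonelli and the coordinate inequalities, which the paper leaves implicit.
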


\begin{proof}
  Writing $x = (x_1, x_2)$ and $y = (y_1, y_2)$, we have
  \begin{align*}
    R_{s+t} \mu (x) & = \iint |(x_1, x_2) - (y_1, y_2) |^{-s-t} \,
                      \mathrm{d} \mu_{y_1} (y_2) \, \mathrm{d} \nu
                      (y_1) \\
                    & \leq \int \biggl( \int |x_2 - y_2|^{-s} \,
                      \mathrm{d} \mu_{y_1} (y_2) \biggr) |x_1 -
                      y_1|^{-t} \, \mathrm{d} \nu (y_1) \\
                    & = \int R_s \mu_{y_1} (x_2) |x_1 - y_1|^{-t}
                      \, \mathrm{d} \nu (y_1).
  \end{align*}
  Hence
  \[
    \lVert R_{s+t} \mu \rVert_\infty \leq \sup_x \lVert R_s \mu_x
    \rVert_\infty \lVert R_t \nu \rVert_\infty.
  \]
  Since $\mu$ is a probability measure, we have
  $I_{s+t} (\mu) \leq \lVert R_{s+t} \mu \rVert_\infty$.
\end{proof}



Now we finish the proof of our main result Theorem~\ref{main}.

\begin{proof}[Proof of Theorem~\ref{main}]
  For $\alpha\ge\frac{1}{2} \log\lambda$, from Theorem~\ref{main2}, we have
  \[
    E\in\mathcal{G}^{s_0} (\T^2),
  \]
  where
  $s_0= \min\Bigl\{\frac{2\log \lambda}{\alpha + \log\lambda},
  \frac{1}{\alpha} \log\lambda\Bigr\}$, hence
  \[
    \hdim E\ge s_0.
  \]

  For $0< \alpha< \frac{1}{2}
  \log\lambda$, 
  Let $\tilde{\mu}_n $ be defined as
  \[
    \tilde{\mu}_n(A)= \int\mu_{x,n} (A) \, \mathrm{d}x,
  \]
  for $A\in\mathcal{B}$, where
  $\mu_{x,n}= \frac{1}{\L^1(F_n(x))} \L^1|_{F_n(x)}$. For $n\ge1$,
  \[
    \tilde{\mu}_n (\T^2\setminus E_n)=0,
  \]
  that is, the support of $\tilde{\mu}_n $ is in $E_n$.  Then
  $\{\mu_{x,n} \}_{x\in\T}$ is a disintegration of
  $\tilde{\mu}_n$ over Lebesgue measure $\L^1$.
 
  By Theorem~\ref{main3}, the $s_1$-dimensional Riesz potentials
  of $\mu_{2k+1}$ are uniformly bounded for $k$ large enough, and
  note that $\L^1$ has a bounded $(1-\varepsilon)$-dimensional
  potential for any $\varepsilon > 0$. Applying
  Theorem~\ref{main3} and Lemma~\ref{lemma:main}, for $k$ large
  enough, we have
  \begin{equation*} \label{alphaless}
    I_{s_1+1-\varepsilon} (\tilde{\mu}_{2k+1})< \infty,
  \end{equation*}
  here $s_0 - \varepsilon = s_1 + 1 -\varepsilon$ when
  $\alpha< \frac{1}{2} \log\lambda$. It is easy to show that for
  any ball $B\subset \T^2$,
  \[
    1\lesssim\liminf_{k\to\infty} \frac{\tilde{\mu}_{2k+1} (B)}{\L
      (B)} \le \limsup_{k \to \infty} \frac{\tilde{\mu}_{2k+1}
      (B)}{\L (B)} \lesssim 1.
  \]
  Applying Lemma~\ref{lemma:persson}, we have
  $\limsup_{k\to\infty}E_{2k+1} \in\mathcal{G}^{s_0 -
    \varepsilon} (\T^2)$. Since $\varepsilon > 0$ is arbitrary,
  we have
  $\limsup_{k\to\infty}E_{2k+1} \in\mathcal{G}^{s_0}$ and in
  particular,
  $\hdim E\ge s_0.$
  
  Therefore, for $\alpha >0$, we have 
  \[
    E\in\mathcal{G}^{s_0} (\T^2),
  \]
  and $\hdim E\ge s_0$.  Combining with
  Lemma~\ref{upperdimension}, we have $\hdim E= s_0$.
\end{proof}

\subsection*{Acknowledgements}
This research of Zhangnan Hu was supported by China Scholarship Council.

\end{document}